\newcommand{\defgl}{\mathrel{\mathop:}=}
\let\orgdescriptionlabel\descriptionlabel
\renewcommand*{\descriptionlabel}[1]{%
  \let\orglabel\label
  \let\label\@gobble
  \phantomsection
  \edef\@currentlabel{#1}%
  \let\label\orglabel
  \orgdescriptionlabel{#1}%
}
\newcounter{mycounter}[section]%
\newtheorem{theorem}[mycounter]{Theorem}
\newtheorem{lem}[mycounter]{Lemma}
\newtheorem{method}[mycounter]{Method}
\newtheorem{prop}[mycounter]{Proposition}
\newtheorem{defi}[mycounter]{Definition}
\newtheorem{kor}[mycounter]{Corollary}
\newtheorem{bsp}[mycounter]{Example}
\newtheorem{rem}[mycounter]{Remark}
\newtheorem*{assumpt}{Assumption}
\definecolor{numberstylegray}{rgb}{0.5,0.5,0.5}%
\definecolor{keywordcolor}{rgb}{0,0,0.75}
\lstdefinestyle{pseudocode}{frame=l, morekeywords={if, try, for, return, else}, keywordstyle=\bfseries\color{keywordcolor}, float = tb, captionpos=b, numbers=left, numberstyle=\footnotesize\color{numberstylegray}, escapeinside={*(}{*)}, xleftmargin=.1\textwidth, xrightmargin=.05\textwidth}
\newcolumntype{R}[1]{>{\raggedleft\arraybackslash}p{#1}}
\title{Topological conditions on inhomogeneous fractals in Martin boundary theory and their algorithmic testing}
\author{Stefan Kohl\footnote{Institute for Stochastics and Applications, University of Stuttgart, Pfaffenwaldring 57, 70569 Stuttgart, Germany, E-mail: \href{mailto:Stefan.Kohl@mathematik.uni-stuttgart.de}{\texttt{Stefan.Kohl@mathematik.uni-stuttgart.de}}}}
\begin{document}
\maketitle
\begin{abstract}
We want to consider Martin boundary theory applied to inhomogeneous fractals. This is under some conditions possible, but up to now it is not clear, how one can easily check, if a certain fractal fulfills those conditions. We want to simplify one condition and further develop a computer algorithm, which can check, if an attractor fulfills the condition. 
\end{abstract}
\bigskip
\textbf{2010 Mathematics Subject Classification:} 28A80, 31C35, 60J10, 60J45\\
\textbf{Keywords:} Martin boundary theory, Markov chains, Green function, fractals, multifractals.\\
\bigskip
%
%
%
%
\section{Introduction}
In a recent paper Freiberg and Kohl \cite{FK2019} studied the possibility to identify the attractor of a weighted iterated function system with the Martin boundary. To do so, they followed mainly the idea of Denker/Sato \cite{DS2001} and Lau/Ngai/Wang \cite{LauNgai2012, LauWang2015}. They adapted the transition probability of the associated Markov chain and were able to determine the Martin boundary under two conditions. The first condition, called \ref{B1}, is simple and easy to verify. The second condition \ref{B2} is quite harder and it is not obvious, if a fractal fulfills \ref{B2}. Because of this we want to investigate \ref{B2}, determine some properties and make this condition easier to manage.

The outline of this article is as follows: in section \ref{sec:preliminaries} we want to introduce the notation and summarize the work of \cite{FK2019}. We do not need all aspects of this article and thus we only present the most necessary. In section \ref{sec:simplB2} we investigate \ref{B2} and reduce it in the sense that we only have to consider a finite number of words instead of an infinite number as it is in the original work.\\
In section \ref{sec:SG2} we want to apply the ideas of section \ref{sec:simplB2} to the Sierpi\'nski gasket. Thereby we will see that we can represent the Sierpi\'nski gasket as a Martin boundary, but sometimes not all weights of each part can be chosen arbitrarily. After this we want to consider in section \ref{sec:facts} some facts about \ref{B2}, where we introduce further the 3-level Sierpi\'nski gasket and a more general example called $n$-diamond propeller.\\
Last but not least we develop in section \ref{sec:algo} an algorithm to analyze every representation of a fractal. For this we investigate the Sierpi\'nski gasket, the 3-level Sierpi\'nski gasket, the Vicsek fractal, the Pentagasket (with and without hole) and the Hexagasket. As we will see the number of weights which can be chosen arbitrarily vary in a broad way depending on the topology of the fractal.
%
%
%
%
\section{Preliminaries}
\label{sec:preliminaries}
Let us start with a common method of representing (and generating) fractals: iterated function systems or shorten IFS. For this, we introduce similarities $S_i:D\subseteq \mathbb R^d\to D$ with $|S_i(x)-S_i(y)|=c_i|x-y|$ and $0<c_i<1$. An iterated function system consists of $N$ similarities, where $N$ has to be finite. By Hutchinsons theorem \cite{Hutchinson1981} exists an unique, non--empty compact invariant subset $K\subset \mathbb R^d$ fulfilling 
\begin{equation*}
	K=S(K)\defgl\bigcup_{i=1}^NS_i(K).
\end{equation*}
We want to call $K$ the attractor of the IFS and think of it as the geometric representation of the fractal. Please note, that there are multiple different IFS generating one specific fractal (respectively $K$).\\
Some of those fractals fulfill the so called open set condition, often shorten by OSC. The OSC states that there exists a non--empty bounded open set $\mathcal O\subset\mathbb R^d$ such that $\bigcup_{i=1}^NS_i(\mathcal O)\subset \mathcal O$ with the union disjoint. Since the OSC is a precondition of \cite{FK2019}, we further want to assume that we fulfill the OSC.

Let us examine the fractal in a more accurate way. For this we introduce the alphabet $\mathcal A=\{1,\dots,N\}$ of $N$ letters where each letter represents one similarity. We want further to consider multiple mappings of a set. For this we introduce the word space $\mathcal W$ by
\begin{equation*}
	\mathcal W \defgl \bigcup_{n\geq 1} \mathcal A^n\cup\{\emptyset\},
\end{equation*}
where $\emptyset$ represents the empty word. Let us denote by $\mathcal W^\star$ all $\mathcal A$-valued sequences $w=w_1w_2\dots$.\\
The word space $\mathcal W$ itself consists of words $w$ which can be represented by $w=w_1\dots w_n$ and $w_i\in\mathcal A$. For such a word we define the length $|w| = n$, the parent to be $w^- = w_1\dots w_{n-1}$ and the restriction to the first $m$ letters by $w|_m \defgl w_1\dots w_m$ for $m\leq |w|$. Further we define the product of two words $v, w\in\mathcal W$ with $v= v_1\dots v_m$ and $w=w_1\dots w_n$ by 
\begin{equation*}
	vw = v_1\dots v_mw_1\dots w_n.
\end{equation*}
For the empty word $\emptyset$ we set $|\emptyset| = 0$ and $w\emptyset = \emptyset w = w$ for every $w\in\mathcal W$.\\

Let us now connect the IFS and the word space. For this we define $S_w(E)\defgl S_{w_1}\circ\dots\circ S_{w_n}(E)= S_{w_1}(\cdots(S_{w_n}(E)))$ for $E\subset \mathbb R^d$ and we can think of words $w$ as cells of the fractal (and vice versa). Since some infinite sequences refer to the same point on the fractal, we should identify them as the same. In addition it is convenient to do this also on all finite words.

\begin{defi}[{\cite[Def. 2.1]{FK2019}}]
	\label{defi:sim}
	The words $v,w\in\mathcal W$ are said to be equivalent, noted by $v\sim w$, if and only if $|v|=|w|$, $S_v(K)\cap S_w(K)\neq\emptyset$ and $v^-\neq w^-$. Additionally we say, that $v$ is equivalent to itself, such that $v\sim v$ holds.\\
	For $v,w\in\mathcal W^\star$ with $v=v_1v_2\dots$ and $w=w_1w_2\dots$ we extend this relation, such that $v\sim w$, if and only if there exists a $n_0\in\mathbb{N}$ such that $v|_n\sim w|_n$ holds for all $n\geq n_0$.\\
	Further we want to define the number of equivalent words by $R(w)\defgl\#\{v\in \mathcal W: v\sim w\}$.
\end{defi}

Please notice, that this definition does not guarantee that this relation is indeed an equivalence relation. Luckily this is true for a big number of fractals, in particular for all nested fractals \cite[Prop. 2.9]{FK2019}.\\
In this paper we only want to consider fractals, where $\sim$ forms an equivalence relation (or could be modified such that $\sim$ becomes an equivalence relation).

As a next step we want to introduce a mass distribution $m$ on the fractal respectively on the word space. 
In general it would be possible to define such a mass distribution in a very general way. For our purpose we want to consider relatively simple mass distributions and use the self--similarity of our fractal. We define the mass distribution $m$ for all $a\in\mathcal A$ in such a way that $m(a)\in(0,1)$ and $\sum_{a\in\mathcal A}m(a)=1$ holds. Further we set $m(\emptyset) = 1$. Now we use the self--similarity: for words $w\in\mathcal W$ with $w=w_1\dots w_n$ we define $m(w) \defgl m(w_1)\dots m(w_n)$.\\
If all those weights $m(i)$ are chosen equal (i.e. $m(i) = \frac1N$) we say that we consider the homogeneous fractal or homogeneous case. Otherwise we consider the inhomogeneous case.\\

We want to introduce a Markov chain on $\mathcal W$. For this we adopt \cite[Def. 3.1]{FK2019} and define the transition probability $p: \mathcal W\times \mathcal W \to [0,1]$ by
\begin{equation*}
	p(v,w)\defgl
	\begin{cases}
		\frac{m(w)}{\sum_{\hat v\sim v}m(\hat v)},&\text{ if } w=\hat vi\text{ with } \hat v\sim v \text{ and } i \in\mathcal A,\\
		0,&\text{ else.}
	\end{cases}
\end{equation*}
Further we introduce the $n$-step transition probability $p_n: \mathcal W\times \mathcal W \to [0,1]$ recursively by
\begin{equation*}
	p_n(v,w)\defgl
	\begin{cases}
		\delta_v(w),&n = 0,\\
		\sum_{u\in\mathcal W}p_{n-1}(v,u)p(u,w),& n\geq 1.  
	\end{cases}
\end{equation*}

Since $p_n(v,w) $ is only positive for a specific $n$ we define the Green function $g:\mathcal W\times\mathcal W\to \mathbb R$ by 
\begin{equation*}
	g(v,w)\defgl\sum_{n= 0}^\infty p_n(v,w)=p_{|w|-|v|}(v,w),\qquad v,w\in\mathcal W.
\end{equation*}

The Martin kernel $k:\mathcal W\times\mathcal W\to \mathbb R$ is then defined as a sort of ``renormalized'' Green function, namely by
\begin{equation*}
	k(v,w)\defgl\frac{g(v,w)}{g(\emptyset, w)}=\frac{g(v,w)}{m(w)}, \qquad v,w\in\mathcal W,
\end{equation*}
where we used in the second identity Theorem 3.7 from\cite{FK2019}, which states that $g(\emptyset,w)=m(w)$ for all $w\in\mathcal W$.\\
Based on the Martin kernel one can define the Martin metric $\rho$, which exact form is for now irrelevant. The Martin space $\overline{\mathcal W}$ is then the $\rho$-completion of $\mathcal W$ and the Martin boundary $\mathcal M$ is the boundary of $\overline{\mathcal W}$:
\begin{equation*}
 \mathcal M\defgl \overline{\mathcal W}\,\backslash\mathcal W.
\end{equation*}
This Martin boundary is in some sense connected to the fractal and the attractor. For the homogeneous case this was first studied by \cite{DS2001, DS2002} and later in \cite{JuLauWang2012, LauNgai2012, LauWang2015}. Under the following conditions this can also be done in the inhomogeneous case.

\begin{assumpt}[\cite{FK2019}]
	We make the following assumptions:
	\begin{description}
	\item[(A)\label{A}] The relation $\sim$ is an equivalence relation
		\item[(B1)\label{B1}] The Martin kernel in the homogeneous case exists.
		\item[(B2)\label{B2}] For all $w\in\mathcal W$ holds either
			\begin{align*}
				m(w)&=m(\tilde w)\quad\forall\; \tilde w\sim w\\
				&\text{or}\\
				w^-&\sim(\tilde w)^-\quad\forall\; \tilde w\sim w.
			\end{align*}
	\end{description}
\end{assumpt}

With this we are able to identify the attractor $K$ with the inhomogeneous Martin boundary:

\begin{theorem}[{\cite[Theorem 5.4  / Cor. 5.5]{FK2019}}]
	\label{theo:MBgleich}
	Under \ref{A}, \ref{B1} and \ref{B2} the inhomogeneous Martin boundary coincides with the homogeneous Martin boundary and
	\begin{equation*}
		K\cong \mathcal W^\star\!\big\slash_{\!\!\sim}\cong \mathcal M_{\hom}= \mathcal M.
	\end{equation*}
	holds.
\end{theorem}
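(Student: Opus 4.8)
The statement bundles three identifications, and my plan is to treat them separately, since two of them are essentially structural while only the middle equality $\mathcal{M}=\mathcal{M}_{\hom}$ genuinely consumes the mass-dependent hypothesis. First I would establish $K\cong\mathcal{W}^\star\big\slash_{\!\!\sim}$ via the coding map; then, for the homogenous model, I would appeal to the Denker--Sato identification of the homogenous Martin boundary with $\mathcal{W}^\star\big\slash_{\!\!\sim}$, where (B1) supplies exactly the convergence of the homogenous Martin kernel that this identification needs; and finally I would show that (B2) forces the inhomogenous Martin kernel to exhibit the same boundary behaviour as the homogenous one, yielding $\mathcal{M}_{\hom}=\mathcal{M}$. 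Chaining the three gives the asserted string of isomorphisms.

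For the coding map, define $\pi:\mathcal{W}^\star\to K$ by $\pi(w)\defgl\bigcap_{n\geq 1}S_{w|_n}(K)$. Because the $S_i$ are contractions, $\operatorname{diam}S_{w|_n}(K)\to 0$, so the nested intersection of non-empty compacta is a single point and $\pi$ is well defined; continuity is immediate from the product topology and surjectivity follows from the invariance $K=S(K)$. The real content is that the fibres of $\pi$ are exactly the $\sim$-classes: two sequences code the same point precisely when their finite restrictions eventually meet on the attractor, which is the extension of $\sim$ to $\mathcal{W}^\star$ recorded in Definition \ref{defi:sim}. Under (A) this is an equivalence relation, so $\pi$ descends to a continuous bijection $\mathcal{W}^\star\big\slash_{\!\!\sim}\to K$; since the source is compact (a quotient of the compact sequence space $\mathcal{A}^{\mathbb{N}}$) and $K$ is Hausdorff, the bijection is a homeomorphism.

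The heart of the argument is $\mathcal{M}=\mathcal{M}_{\hom}$, where I would compare the two Martin kernels directly. Writing $M(v)\defgl\sum_{\tilde v\sim v}m(\tilde v)$ and expanding the Green function as a weighted path count, each admissible path $v=u^{(0)},u^{(1)},\dots,u^{(k)}=w$ (with $(u^{(j+1)})^-\sim u^{(j)}$) contributes, after division by $m(w)$, the factor
\begin{equation*}
	\frac{1}{M(v)}\prod_{j=1}^{k-1}\frac{m(u^{(j)})}{M(u^{(j)})},
\end{equation*}
because the masses of the appended letters telescope against the numerators. The identical expansion in the homogenous model replaces every inner factor by $1/R(u^{(j)})$ and the prefactor by $N^{|v|}/R(v)$, which is independent of $w$. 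Now (B2) enters through its dichotomy: at every intermediate word $u^{(j)}$ satisfying the first alternative, all equivalents share the mass $m(u^{(j)})$, so $m(u^{(j)})/M(u^{(j)})=1/R(u^{(j)})$, and the inhomogenous per-step weight already coincides with the homogenous one.

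The main obstacle is the second alternative of (B2), where the masses inside an equivalence class genuinely differ but all parents are equivalent. For such words the individual factors $m(u^{(j)})/M(u^{(j)})$ no longer match $1/R(u^{(j)})$, and one cannot argue path by path. The remedy is to regroup: the condition $(\tilde u)^-\sim u^-$ for all $\tilde u\sim u$ means that every representative of the class admits the same incoming and outgoing transitions, so one may sum the contributions over a whole class rather than over single words; the summation $\sum_{\tilde u\sim u}m(\tilde u)=M(u)$ then cancels the offending denominator and leaves the same combinatorial count that governs the homogenous kernel. Propagating this level-by-level shows that $k(v,w)$ and $k_{\hom}(v,w)$ converge along the same sequences $w$ and that their limits separate boundary directions identically, with (B1) guaranteeing that these limits exist. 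Hence the two Martin metrics are topologically equivalent, their completions agree, and $\mathcal{M}=\mathcal{M}_{\hom}$; combined with the coding map and the Denker--Sato identification this delivers $K\cong\mathcal{W}^\star\big\slash_{\!\!\sim}\cong\mathcal{M}_{\hom}=\mathcal{M}$.
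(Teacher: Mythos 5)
A point of order first: this paper does not prove Theorem \ref{theo:MBgleich} at all -- it is imported verbatim from \cite{FK2019} (Theorem 5.4 and Corollary 5.5 there) -- so there is no in-text proof to compare yours against. Judged on its own terms, your three-step decomposition (coding map for $K\cong \mathcal W^\star\!\big\slash_{\!\!\sim}$, the Denker--Sato/Lau--Ngai--Wang identification of $\mathcal M_{\hom}$ under (B1), and a kernel comparison driven by (B2)) is the natural architecture, and your path expansion of $g(v,w)/m(w)$ is correct. The only thin spot is exactly the step you flag: ``propagating level-by-level'' through a path that interleaves the two alternatives of (B2). The clean way to close it is to aggregate over classes throughout: with $M(u)\defgl\sum_{\tilde u\sim u}m(\tilde u)$ and $q_n(v,C)\defgl\sum_{u\in C}p_n(v,u)$, note that the \emph{outgoing} law $p(u,\cdot)$ depends only on the class $[u]$ because $\sim$ is an equivalence relation -- that invariance comes from (A), not from (B2) as your wording suggests -- so $p_n(v,w)=q_{n-1}(v,[w^-])\,m(w)/M([w^-])$ with the terminal $w$ kept fixed. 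Setting $r_n(v,C)\defgl q_n(v,C)/M(C)$, the second alternative gives $r_n(v,C)=r_{n-1}(v,D)$ (common parent class $D$) and the first gives $r_n(v,C)=R(C)^{-1}\sum_{w\in C}r_{n-1}(v,[w^-])$; both recursions coincide with the homogenous ones, so induction yields $k(v,w)=c(v)\,k_{\hom}(v,w)$ with $c(v)=M_{\hom}(v)/M(v)$ independent of $w$. It is this proportionality, not equality, that you obtain, and since the factor is $w$-independent it leaves the Martin compactification unchanged, which is what the theorem asserts.
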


This result is stunning. At the same time it arises the question, when the preconditions are fulfilled. Condition \ref{B1} can be easily checked, for example we can use the results of \cite{LauWang2015}. Condition \ref{B2} is quite harder for this reason and in the following we want to study \ref{B2} in the following intensely.
%
%
%
%
\section{Simplifying condition \ref{B2}}
\label{sec:simplB2}
In a first step we want to examine how we can express the condition \ref{B2} in an easier way. This should also help to detect iterated function systems where \ref{B2} is in the homogeneous case fulfilled. In other words, we want to reverse the problem and analyze which conditions imply \ref{B2}.\\
For this we formulate our first helpful lemma.

\begin{lem}
	\label{lem:simplB2}
	Let $\bar{v} = uv\in\mathcal W$ with $u, v\in\mathcal W$ and $u\neq\emptyset$. If all $\bar{w}\sim \bar{v}$ can be expressed as $\bar{w} = uw$ and $v\sim w$ fulfills \ref{B2}, then $\bar{v}\sim \bar{w}$ fulfills also \ref{B2}.
\end{lem}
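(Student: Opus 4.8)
The plan is to reduce condition (B2) for $\bar v = uv$ to condition (B2) for $v$ by transporting everything through the common prefix $u$. Two structural facts make this possible, and I would only state them since both are immediate from the definitions: the multiplicativity of the mass, $m(uw) = m(u)\,m(w)$, which comes straight from $m(w_1\dots w_n)=m(w_1)\cdots m(w_n)$, and the compatibility of the parent operation with left-concatenation, namely $(uw)^- = u\,w^-$ whenever $w\neq\emptyset$.

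The heart of the argument is the correspondence
\begin{equation*}
	uw \sim uv \quad\Longleftrightarrow\quad w\sim v .
\end{equation*}
For the length clause this is clear, since $|uw|=|uv|\iff|w|=|v|$. For the intersection clause I would use that each $S_i$ is injective (a similarity with ratio $c_i>0$), hence so is $S_u=S_{u_1}\circ\dots\circ S_{u_m}$; consequently $S_{uw}(K)\cap S_{uv}(K)=S_u\bigl(S_w(K)\cap S_v(K)\bigr)$ is nonempty exactly when $S_w(K)\cap S_v(K)$ is. For the parent-distinctness clause I would use $(uw)^-=u\,w^-$ and $(uv)^-=u\,v^-$ together with the fact that $u$ is a common prefix, so $u\,w^-\neq u\,v^-\iff w^-\neq v^-$. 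Combining the three clauses yields the equivalence; the short-word cases (for instance $|v|\le 1$, where a parent becomes $\emptyset$) I would handle separately, noting that they collapse onto the reflexive clause $v\sim v$ of Definition \ref{defi:sim}.

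With the correspondence available I would invoke the hypothesis: every $\bar w\sim\bar v$ has the form $\bar w = uw$, and by the $\Rightarrow$-direction this $w$ satisfies $w\sim v$. Now I split on which alternative of (B2) holds for $v$. If $m(v)=m(w)$ for all $w\sim v$, then for every $\bar w=uw\sim\bar v$ multiplicativity gives $m(\bar w)=m(u)\,m(w)=m(u)\,m(v)=m(\bar v)$, so the mass alternative of (B2) holds for $\bar v$. If instead $v^-\sim w^-$ for all $w\sim v$, I would apply the $\Leftarrow$-direction (monotonicity under left-concatenation) to the pair $v^-,w^-$: from $v^-\sim w^-$ it follows that $u\,v^-\sim u\,w^-$, i.e.\ $\bar v^-\sim\bar w^-$, so the parent alternative of (B2) holds for $\bar v$. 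In either case $\bar v$ fulfils (B2).

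The main obstacle I anticipate is not the case analysis, which is mechanical once the correspondence is in place, but the careful verification that $\sim$ is both preserved and reflected by left-concatenation with $u$ — in particular the parent-distinctness clause and the boundary cases where $v^-$ or $w^-$ equals the empty word, where the monotonicity step $v^-\sim w^-\Rightarrow u\,v^-\sim u\,w^-$ must be justified from the reflexive part of the definition rather than from the three defining conditions. I would also make sure the hypothesis is used in precisely the right spot: it is exactly what guarantees that the prefix $u$ is common to the \emph{entire} equivalence class of $\bar v$, which is what lets the per-element argument run uniformly over all $\bar w\sim\bar v$.
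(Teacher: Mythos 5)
Your proof is correct and follows essentially the same route as the paper's: a case split on which alternative of (B2) holds for $v\sim w$, with multiplicativity of $m$ handling the mass case and a clause-by-clause verification that $\bar v^-\sim\bar w^-$ (same length, applying $S_u$ to the nonempty intersection, and distinctness of the grandparents) handling the parent case. Your packaging of this as a two-sided correspondence $uw\sim uv\iff w\sim v$ and your explicit attention to the boundary cases with empty parents is slightly more careful than the paper, but it is the same argument.
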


\begin{proof}
	Consider $\bar{v} = uv\in\mathcal W$ with $u, v\in\mathcal W$ and $u\neq\emptyset$. Suppose that we can express all equivalent words $\bar{w}\sim \bar{v}$ by $\bar{w} = uw$. Further $v\sim w$ satisfies \ref{B2}. We want do distinguish two cases, depending on which condition of \ref{B2} is fulfilled by $v\sim w$.\\
	In the first case we want to consider that $v\sim w$ fulfill \ref{B2} by $m(v) = m(w)$. This implies that 
	\begin{equation*}
		m(\bar{v}) = m(uv) = m(u)m(v) = m(u)m(w) = m(uw) =m(\bar{w}) 
	\end{equation*}
	holds. Thus $\bar{v}\sim \bar{w}$ fulfill \ref{B2} by the first condition.\\
	Consider as the second case that $v\sim w$ fulfill \ref{B2} by $v^- \sim w^-$. We claim that $\bar{v}^-\sim \bar{w}^-$ holds. To verify this we consider the definition of the equivalence relation.\\
	Both words $\bar{v}^-$ and $\bar{w}^-$ have same length since $|\bar{v}| = |\bar{w}|$ holds. \\
	By $v\sim w$ follows $v^-\neq w^-$ and since $v^-\sim w^-$ must hold we get 
	\begin{equation}
		S_{v^-}(K)\cap S_{w^-}(K)\neq\emptyset. \label{eq:bew:simplB2}
	\end{equation}
	If we apply $S_u(\cdot)$ to \eqref{eq:bew:simplB2} we obtain $S_{\bar{v}^-}(K)\cap S_{\bar{w}^-}(K)\neq\emptyset$.\\
	As last part it remains that $(\bar{v}^-)^-\neq (\bar{w}^-)^-$ holds. By $v^-\sim w^-$ and $v^-\neq w^-$ follows $(v^-)^-\neq (w^-)^-$. This implies $(uv^-)^-\neq (uw^-)^-$ respectively $(\bar{v}^-)^-\neq (\bar{w}^-)^-$.\\
	Thus we receive that $\bar{v}^-\sim \bar{w}^-$ holds which implies that $\bar{v}\sim \bar{w}$ fulfill \ref{B2} by the second condition and we can complete the proof.
\end{proof}

As a next step we want to distinguish between two cases, one which fulfills \ref{B2} automatically and one where we can find an alternative formulation of \ref{B2}.

\begin{kor}
	\label{kor:simplB2}
	Let $v = v_0v_1\dots v_n\in \mathcal W$. If $v_0\neq w_0$ holds for all $w\sim v$ ($w= w_0w_1\dots w_n$) and $v$ should satisfy \ref{B2}, then:
	\vspace{-\topsep}
	\begin{itemize}
		\item if one $w\sim v$ fulfill $w^-\not\sim v^-$: $v$ and all $w$ must fulfill $m(v) = m(w)$,
		\item if all $w\sim v$ fulfill $w^-\sim v^-$: \ref{B2} is automatically fulfilled.
	\end{itemize}
\end{kor}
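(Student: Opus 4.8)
The corollary is essentially a case analysis built directly on top of Lemma~\ref{lem:simplB2} and the definition of (B2), so the plan is to unpack what the hypothesis $v_0 \neq w_0$ for all $w \sim v$ means and then split on which branch of (B2) the equivalent words land in.

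First I would set up the situation carefully. We have $v = v_0 v_1 \dots v_n$ and every $w \sim v$ is written $w = w_0 w_1 \dots w_n$ with $v_0 \neq w_0$. The key observation I would record at the outset is that the assumption $v_0 \neq w_0$ for all $w \sim v$ means the words diverge already in the first letter, so we cannot factor out a nontrivial common prefix $u$; this is precisely the regime complementary to Lemma~\ref{lem:simplB2}, which handled the case where a common prefix $u \neq \emptyset$ can be split off. So here the recursion of the lemma gives us nothing, and we are forced to look at (B2) directly at the top level.

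Next I would carry out the two-case split exactly as the statement suggests. For the second bullet, suppose \emph{every} $w \sim v$ satisfies $w^- \sim v^-$. Then the second line of (B2), namely $w^- \sim v^-$ for all $w \sim v$, holds by hypothesis, so (B2) is satisfied automatically regardless of the masses; this bullet is immediate. For the first bullet, suppose there exists some $w_\ast \sim v$ with $w_\ast^- \not\sim v^-$. Since $v$ is assumed to satisfy (B2), and the second disjunct $w^- \sim v^-$ for all $w \sim v$ now \emph{fails} (it fails for $w_\ast$), the only way (B2) can hold is through the first disjunct: $m(v) = m(w)$ for all $w \sim v$. That forces $m(v) = m(w)$ for every equivalent $w$, which is exactly the conclusion of the first bullet.

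The argument is almost entirely a matter of reading the logical structure of (B2) as a disjunction and noting which branch survives under each hypothesis, so there is no serious computational obstacle. The only point requiring genuine care, and which I would treat as the main thing to get right, is the first bullet: I must be clear that (B2) is a condition attached to the word $v$ as a whole (``for all $w \sim v$''), so the failure of the $w^- \sim v^-$ branch for a \emph{single} witness $w_\ast$ already destroys that disjunct for the whole equivalence class, thereby forcing the mass-equality branch \emph{uniformly} over all $w \sim v$. I would state this explicitly to avoid the tempting but incorrect reading in which one tries to satisfy one disjunct for some equivalent words and the other disjunct for the rest. With that observation in place the corollary follows directly from the definition of (B2).
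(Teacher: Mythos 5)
Your proof is correct and follows essentially the same route as the paper: both arguments note that the hypothesis $v_0\neq w_0$ places you in the regime complementary to Lemma~\ref{lem:simplB2} and then simply read off which disjunct of (B2) can survive in each of the two cases. Your explicit remark that a single witness $w_\ast$ with $w_\ast^-\not\sim v^-$ kills the second disjunct for the entire equivalence class, forcing the mass equality uniformly, is exactly the point the paper's proof relies on.
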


\begin{proof}
	By Lemma \ref{lem:simplB2} we only have to consider words which differ already in the first letter. Thus we want to consider $v\in\mathcal W$ and all equivalent words $w\sim v$ fulfilling $w_0\neq v_0$. Further \ref{B2} should hold.\\
	In the first case there exists an equivalent word $w$ with $w^-\not\sim v^-$. Since \ref{B2} must hold, the first condition must be fulfilled and thus
	\begin{equation*}
	 m(w) = m(v)\text{\quad for all }w\sim v.
	\end{equation*}
	In all other cases all equivalent words $w$ fulfill $w^-\sim v^-$. Immediately it follows that \ref{B2} is fulfilled by the second condition.
\end{proof}

We can combine the previous lemma and corollary into one single statement, which specify when \ref{B2} is fulfilled.

\begin{prop}
	\label{prop:simplB2}
	Consider $v=v_0\dots v_n$ and $w= w_0\dots w_n$. If all relations $v\sim w$ with $v_0\neq w_0$ and $v^-\not\sim w^-$ fulfill $m(v) = m(\tilde w)$ for all $\tilde w\sim v$, then \ref{B2} is satisfied.
\end{prop}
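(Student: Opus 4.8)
The plan is to read this Proposition as the synthesis of Lemma~\ref{lem:simplB2} and Corollary~\ref{kor:simplB2} and to establish (B2) for an arbitrary word $\bar v\in\mathcal W$ by inspecting its whole equivalence class. Concretely, I would fix $\bar v$, let $u$ be the longest word that is a common prefix of $\bar v$ and of every $\bar w\sim\bar v$, and write $\bar v=uv$ as well as $\bar w=uw$. The first thing to check is that cutting off $u$ neither creates nor destroys equivalences, i.e. that $\bar v\sim\bar w$ holds if and only if $v\sim w$. The length condition is immediate from $|\bar v|=|\bar w|$; since $\bar v^-=uv^-$ and $\bar w^-=uw^-$, the parent condition $\bar v^-\neq\bar w^-$ is equivalent to $v^-\neq w^-$; and because $S_u$ is an injective similarity, $S_{\bar v}(K)\cap S_{\bar w}(K)\neq\emptyset$ is equivalent to $S_v(K)\cap S_w(K)\neq\emptyset$. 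If $u\neq\emptyset$, Lemma~\ref{lem:simplB2} then transfers (B2) for the shorter word $v$ to (B2) for $\bar v$, so by maximality of $u$ it suffices to treat the case where the equivalent words already differ in the first letter, $v_0\neq w_0$.

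In this remaining situation I would invoke the dichotomy of Corollary~\ref{kor:simplB2}. If every $w\sim v$ satisfies $w^-\sim v^-$, the second clause of (B2) holds automatically and nothing further is needed. Otherwise there is at least one $w\sim v$ with $v_0\neq w_0$ and $v^-\not\sim w^-$, which is exactly the configuration singled out in the hypothesis of the Proposition; that hypothesis then forces $m(v)=m(\tilde w)$ for every $\tilde w\sim v$. As the mass is now constant across the whole equivalence class, the first clause of (B2) is satisfied. Carrying this conclusion back through Lemma~\ref{lem:simplB2} yields (B2) for the original $\bar v$, and since $\bar v$ was arbitrary, (B2) holds on all of $\mathcal W$.

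The step I expect to demand the most care is the reduction, not the case split. One must make sure that removing the maximal common prefix genuinely places one in the hypothesis of Corollary~\ref{kor:simplB2}, where all equivalent words differ in the first letter, and reconcile this with a possibly heterogeneous class in which some equivalent words share the leading letter of $v$ while others do not. The feature that rescues the argument is that the Proposition assumes the strong conclusion $m(v)=m(\tilde w)$ for all $\tilde w\sim v$, not merely for the single pair that triggers it: one relation with $v_0\neq w_0$ and $v^-\not\sim w^-$ already fixes the mass on the entire class. Throughout, this uses (A), so that $\sim$ is a bona fide equivalence relation and the symmetric forms $v^-\not\sim w^-$ and $w^-\not\sim v^-$ may be used interchangeably.
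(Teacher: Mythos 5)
Your proof follows the same route as the paper's: reduce via Lemma \ref{lem:simplB2} to equivalence classes whose members differ in the first letter, then split according to Corollary \ref{kor:simplB2} into the case where all parents are equivalent (second clause of (B2)) and the case where some pair of parents is not, where the hypothesis forces constant mass across the class and hence the first clause. Your write-up is in fact more careful than the paper's two-line argument, notably in verifying that stripping the maximal common prefix neither creates nor destroys equivalences and in flagging the possibility of a class in which only some equivalent words share the leading letter --- a configuration the paper's proof passes over silently.
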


\begin{proof}
	Let $v,w\in\mathcal W$. Lemma \ref{lem:simplB2} implies that we only have to consider relations where the first letter differs. If further the parents $v^-, w^-$ are equivalent for all $w\sim v$ we can apply the second case of Corollary \ref{kor:simplB2} and \ref{B2} is fulfilled.\\
	If there exists one equivalent word $w\sim v$ which parent is not equivalent to $v^-$, but at the same time $m(\tilde w) = m(v)$ holds for all $\tilde w\sim v$, \ref{B2} is satisfied by its second condition.
\end{proof}

We want to use this proposition extensive in the following, since it allows us to check only a small amount of words in $\mathcal W$. In contrast to this we would have to check all words in $\mathcal W$, if we want to verify \ref{B2} in a direct way. This is indeed very problematic, since we would have to check infinite many words. In other words Proposition \ref{prop:simplB2} allows us to check only finite many words because of the self--similarity. As we will see this amount of words is very low. For example we only have to check 3 relations on the Sierpi\'nski gasket, which we will see in the next section.

%
%
%
%
\section{Sierpi\'nski gasket as the simplest example}
\label{sec:SG2}
We want to apply and understand the results from section \ref{sec:simplB2}. For this, we consider one of the simplest fractals one can think of: the Sierpi\'nski gasket. %
To be more precise, we want to consider the Sierpi\'nski gasket in the plane, shorten by SG. The SG is generated through three smaller copies of a triangle, which are arranged such that they form again the starting triangle with a hole. This is done infinite times and the resulting figure is called Sierpi\'nski gasket. Figure \ref{fig:generatingSG2} should help to understand this procedure. %
\begin{figure}
	\centering
	\includegraphics[page=1]{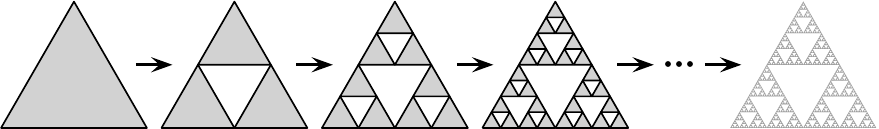}
	\caption{Generating the Sierpi\'nski gasket in a descriptive way}
	\label{fig:generatingSG2}
\end{figure}
Of course one could also specify the three similarities in a proper mathematical way, but this would only distract ourselves from the problem, since it is more a topological problem. Instead we want to analyze in which way each copy can be arranged. So let us consider on single small copy. This copy can be rotated by 0, $\frac23\pi$ or $-\frac23\pi$ and may be flipped over. In combination we get six possible ways to map the big triangle onto the small copy. Those six ways are also illustrated in figure \ref{fig:SG2-6ways}.\\%
\begin{figure}
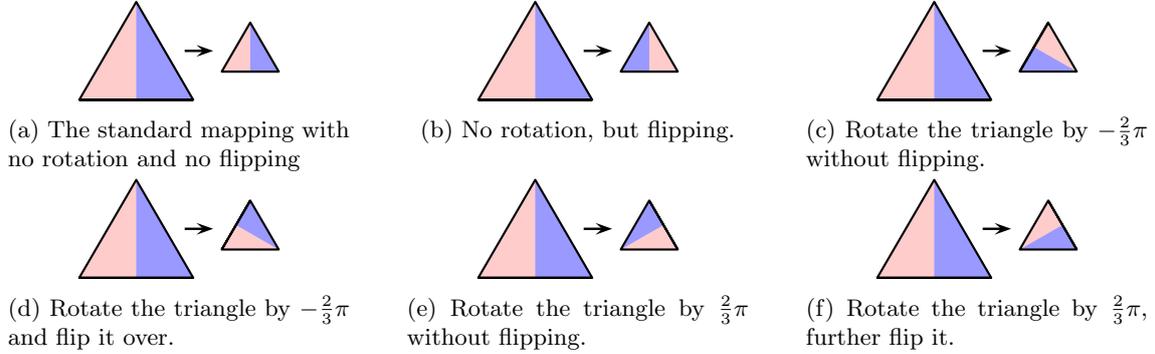

	\centering
	\subcaptionbox{The standard mapping with no rotation and no flipping\label{fig:SG2-6ways-way1}}[0.30\textwidth][t]{\centering\includegraphics[page=2]{graphics.pdf}}%
	\hfill%
	\subcaptionbox{No rotation, but flipping.\label{fig:SG2-6ways-way2}}[0.30\textwidth][t]{\centering\includegraphics[page=3]{graphics.pdf}}%
	\hfill%
	\subcaptionbox{Rotate the triangle by $-\frac23\pi$ without flipping.\label{fig:SG2-6ways-way3}}[0.30\textwidth][t]{\centering\includegraphics[page=4]{graphics.pdf}}%
	\\%
	\subcaptionbox{Rotate the triangle by $-\frac23\pi$ and flip it over.\label{fig:SG2-6ways-way4}}[0.30\textwidth]{\centering\includegraphics[page=5]{graphics.pdf}}%
	\hfill%
	\subcaptionbox{Rotate the triangle by $\frac23\pi$ without flipping.\label{fig:SG2-6ways-way5}}[0.30\textwidth]{\centering\includegraphics[page=6]{graphics.pdf}}%
	\hfill%
	\subcaptionbox{Rotate the triangle by $\frac23\pi$, further flip it.\label{fig:SG2-6ways-way6}}[0.30\textwidth]{\centering\includegraphics[page=7]{graphics.pdf}}%
	\caption{6 possible mappings for the smaller copies at the Sierpi\'nski gasket. The coloring should help to understand the orientation and the flipping. }
	\label{fig:SG2-6ways}
\end{figure}%
We can apply one of the six mappings independently on all three copies, thus we get in total $6^3 = 216$ possible IFS. This number initially appears small. But if we consider other fractals the number of possible IFS increases rapidly. Because of this, we should consider all IFS in a general way.\\
\begin{figure}
	\centering
	\includegraphics[page=8]{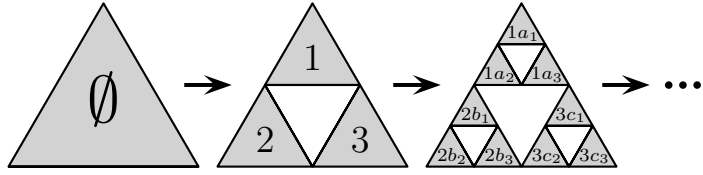}
	\caption{The general word space of the Sierpi\'nski gasket. It must hold, that $a_i$ (resp. $b_i$ and $c_i$) are pairwise different and $a_i, b_i, c_i\in\{1,2,3\}$.}
	\label{fig:SG2-generalWordspace}
\end{figure}%
For the Sierpi\'nski gasket we want to write down the general word space. This can be seen in Figure \ref{fig:SG2-generalWordspace} for words of length $2$. We choose the coding in such a way that the first letter is fixed and thus the rotation and flipping does not matter for the first letter. For the second letter they come into account and the second letter has to variable. We can denote the children of $1$ by $1a_i$ and similar the children of $2$ (resp. $3$) by $b_i$ ($c_i$). It must hold, that $a_1, a_2$, $a_3\in\{1,2,3\}$ and are pairwise distract. The same must hold for $b_i$ respectively $c_i$. By the definition of the equivalence relation it follows, that $1a_2\sim 2b_1$, $1a_3\sim 3c_1$ and $2b_3\sim 3c_2$ must hold. By Proposition \ref{prop:simplB2} it is sufficient, to consider only those three relations. Thus, if \ref{B2} should be fulfilled, the equations
\begin{equation}
	\begin{gathered}
		m(1a_2) = m(2b_1),\\
		m(1a_3) = m(3c_1),\\
		m(2b_3) = m(3c_2)
	\end{gathered}%
	\label{eq:GeneralSG2}
\end{equation}
must hold. Remember, that $m$ is multiplicative, so $m(1a_2) = m(1)m(a_2)$. Since $m$ is a mass distribution, we have to add
\begin{equation}
	\begin{gathered}
		m(1)+m(2)+m(3) = 1,\\
		m(a)> 0, a\in\mathcal A
	\end{gathered}%
	\label{eq:SG2MassDistribution}
\end{equation}
to the equations in \eqref{eq:GeneralSG2}. In total we get four algebraic equations in three positive variables $m(1)$, $m(2)$ and $m(3)$. The only problem are the values of $a_1,\dots, c_3$, which differ with each IFS. For this reason we want consider in the following three different examples and we will get a deeper insight in the dependency of choosing small mappings fulfilling \eqref{eq:GeneralSG2} and \eqref{eq:SG2MassDistribution}.
\begin{bsp}
	\label{bsp:SG2-0fp}
	Consider the IFS with $a_1 = 1, a_2 = 2, a_3= 3, b_1=3, b_2 = 1, b_3=2, c_1=2, c_2=3, c_3=1$. Figure \ref{fig:SG2-0fp} shows this IFS in a graphical way, moreover the word space is indicated.\\
	\begin{figure}
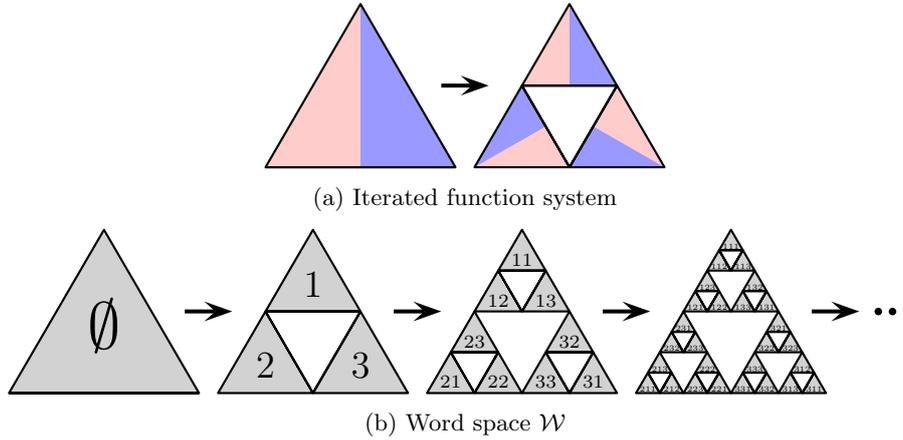

		\centering
		\begin{subfigure}[b]{0.99\textwidth}
			\centering
			\includegraphics[page=9]{graphics.pdf}
			\caption{Iterated function system}
		\end{subfigure}
		\\[1ex]
		\begin{subfigure}[b]{0.99\textwidth}
			\centering
			\includegraphics[page=10]{graphics.pdf}
			\caption{Word space $\mathcal W$}
		\end{subfigure}
		\caption{An IFS where no weights can be chosen. The coloring should help to understand the orientation of the contractions. It must hold, that $m(1) = m(2) = m(3)$ and thus $m(a) = \frac13$ for $a\in\mathcal A$.}
		\label{fig:SG2-0fp}
	\end{figure}%
	The equations \eqref{eq:GeneralSG2} and \eqref{eq:SG2MassDistribution} turn into
	\begin{equation*}
	    \begin{gathered}
		m(1)m(1) = m(2)m(3),\\
		m(1)m(3) = m(3)m(2),\\
		m(2)m(2) = m(3)m(2),\\
		m(1) + m(2) + m(3) = 1.
	    \end{gathered}
	\end{equation*}
	Those equations can easily be solved (unlike the general equations in \eqref{eq:GeneralSG2}). We get, that $m(1) = m(2) = m(3) = \frac13$ must hold. We conclude therefore, that this IFS only fulfills \ref{B2} in the homogeneous case, which is (for us) relatively uninteresting.
\end{bsp}
The next example is much more interesting, since we get a first weighted example of the Sierpi\'nski gasket fulfilling \ref{B2}.
\begin{bsp}
	\label{bsp:SG2-1fp}
	Let $a_1 = 1, a_2 = 2, a_3= 3, b_1=1, b_2 = 2, b_3=3, c_1=2, c_2=1, c_3=3$. This iterated function system and the word space are illustrated in Figure \ref{fig:SG2-1fp}.\\
	\begin{figure}
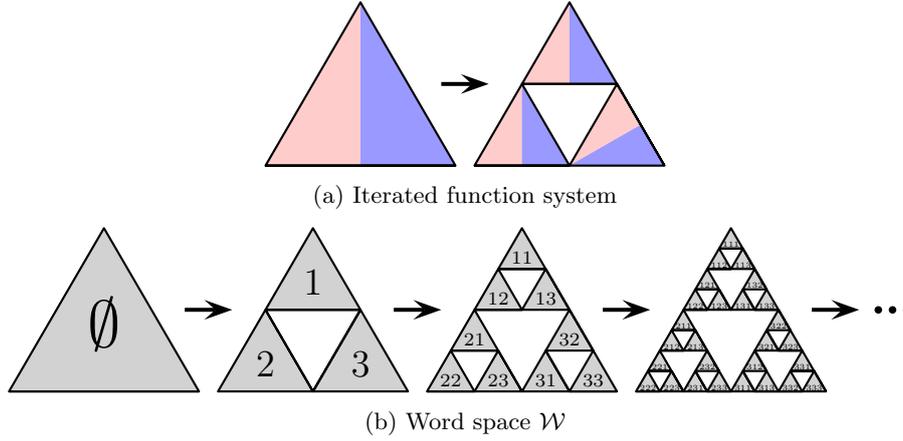

		\centering
		\begin{subfigure}[b]{0.99\textwidth}
			\centering
			\includegraphics[page=11]{graphics.pdf}
			\caption{Iterated function system}
		\end{subfigure}
		\\[1ex]
		\begin{subfigure}[b]{0.99\textwidth}
			\centering  
			\includegraphics[page=12]{graphics.pdf}
			\caption{Word space $\mathcal W$}
		\end{subfigure}
		\caption{One weight can be chosen. In this case, we can choose $m(3) \in(0,1)$. It follows, that $m(1) = m(2) = \frac{1-m(3)}{2}$ must hold.}
		\label{fig:SG2-1fp}
	\end{figure}%
	The corresponding equations, which need to be fulfilled in order to fulfill \ref{B2}, are then 
	\begin{equation*}
	      \begin{gathered}
		  m(1)m(2) = m(2)m(1),\\
		  m(1)m(3) = m(3)m(2),\\
		  m(2)m(3) = m(3)m(1),\\
		  m(1) + m(2) + m(3) = 1.
	      \end{gathered}
	\end{equation*}
	Immediately one can see that $m(1) = m(2)$ must hold. At the same time the equations make no restrictions on $m(3)$, except from $2m(1) + m(3) = 1$. This implies that we either can choose $m(3)\in(0,1)$ with $m(1) = m(2) = \frac{1-m(1)}2$ or we can choose $m(1)\in (0,\frac12)$ with $m(3) = 1 - 2 m(1)$ (and of course $m(2) = m(1)$).\\
	In both cases we can choose the weights such that they are inhomogeneous and at the same time \ref{B2} is fulfilled. Since we can choose one weight, we want to call this IFS a case with one free weight or one free parameter. 
\end{bsp}
The previous examples show that it depends on the IFS, if \ref{B2} can be fulfilled in the inhomogeneous case. The question arises whether \ref{B2} is fulfilled and we can choose more than one weight arbitrarily. The next example should help to clear this question.
\begin{bsp}
	\label{bsp:SG2-2fp}
	Consider the original Sierpi\'nski gasket without rotations or flippings (see Figure \ref{fig:SG2-2fp}). In this case we have $a_1 = 1, a_2 = 2, a_3= 3, b_1=1, b_2 = 2, b_3=3, c_1=1, c_2=2$ and $c_3=3$ with
	\begin{equation*}
		\begin{gathered}
			m(1)m(2) = m(2)m(1),\\
			m(1)m(3) = m(3)m(1),\\
			m(2)m(3) = m(3)m(2),\\
			m(1) + m(2) + m(3) = 1.
		\end{gathered}
	\end{equation*}
	\begin{figure}
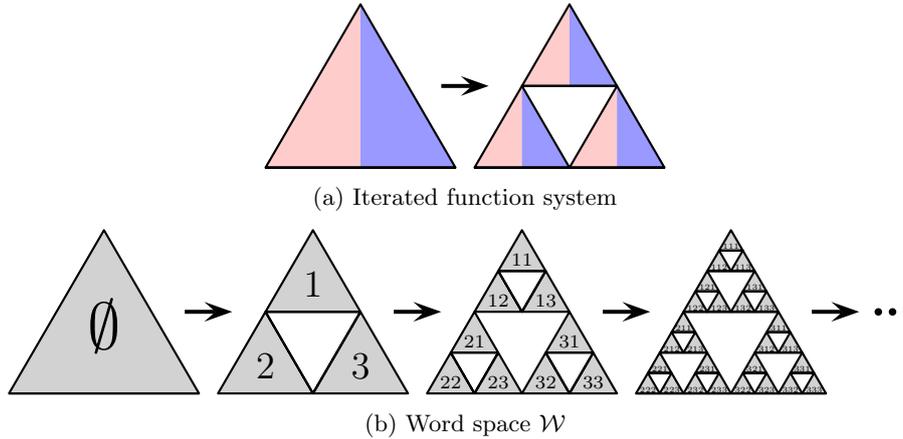

		\centering
		\begin{subfigure}[b]{0.9\textwidth}
			\centering
			\includegraphics[page=13]{graphics.pdf}
			\caption{Iterated function system}
		\end{subfigure}
		\\[1ex]
		\begin{subfigure}[b]{0.99\textwidth}
		    \centering
		    \includegraphics[page=14]{graphics.pdf}
		    \caption{Word space $\mathcal W$}
		\end{subfigure}
		\caption{Two weights can be chosen. For example, if we choose $m(1)\in(0,1)$, we can further choose $m(2)\in (0,1-m(1))$. This implies $m(3) = 1-m(1)-m(2)$.}
		\label{fig:SG2-2fp}
	\end{figure}%
	The first three equations are automatically fulfilled and only $m(1) + m(2) + m(3) = 1$ remains. Thus we can choose two weights and the third is determined by this equation. For example we can choose first $m(1)\in(0,1)$. Then we choose $m(2)\in(0,1-m(1))$, where the upper bound of $1-m(1)$ comes from the fact, that $m(1) + m(2) < 1$ must hold because of $m(3)>0$. We can proceed in the same way, if we want to choose other weights.\\
	In total we have two free parameters on this particular IFS.
\end{bsp}
Those three examples show that it purely depends on the topology of each fractal. In other words, we can consider a certain fractal and may look at all iterated function systems which generate this fractal. Depending on the topology we get a different number of free parameters. If we do this for the remaining 213 IFS of the Sierpi\'nski gasket we get that there are in total 194 IFS with zero free parameters, 21 IFS with one free parameter and exactly one (!) IFS with two free parameters, the one from Example \ref{bsp:SG2-2fp}.\\%
Because of this, we want to establish in section \ref{sec:algo} an algorithm which analyzes all possible IFS to a certain fractal and returns the number of free parameters. But before we do this, let us first take a closer look at some facts about \ref{B2}.
%
%
%
%
\section{Some general facts}
\label{sec:facts}
We want to collect in this section some facts about (inhomogeneous) fractals and the condition \ref{B2}. As a first topic we want to answer the question if there is a minimal inhomogeneous fractal which fulfills \ref{B2}. Later we want to set up a minimal example which won't fulfill \ref{B2}.\\
Those minimal examples should fulfill some conditions such that they are actually real minimal examples:
\begin{itemize}
	\item We want the number of copies resp. the length of the alphabet $\mathcal A$ to be minimal.
	\item The open set condition should hold. This condition seems may be not relevant for finding a minimal example, since we could consider the so called Haka Tree (for more information see \cite[Example 1.2.9]{Kigami2001}. But since we want to apply the results of \cite{FK2019} we need the OSC. 
	\item All those examples should be indeed fractals (see for example \cite[Introduction]{FalconerFG} for a definition of a fractal through a list of properties). Of course, one could define the interval $[0,1]$ as a self--similar fractal with two (or more copies), but it is obvious that this is only an artificial fractal.
	\item Those examples should contain at least a word $w$ with $R(w) > 1$. If we would consider a fractal where all words $w$ fulfill $R(w) = 1$, then \ref{B2} would be fulfilled automatically. This would make the minimal example meaningless.
	\item The mass distribution should be inhomogeneous, otherwise \ref{B2} is also automatically fulfilled.
\end{itemize}
In other words, our minimal example should be an inhomogeneous fractal, minimal in the sense of the number of copies and should contain a word $w$ with $R(w) > 1$.
\begin{bsp}[Minimal inhomogeneous fractal fulfilling \ref{B2}]
\label{bsp:minB2}
    \begin{figure}
    \centering
    \subcaptionbox{The IFS generating the von Koch curve.\label{fig:KochKurve1-IFS}}[\textwidth]{
		\includegraphics[page=15]{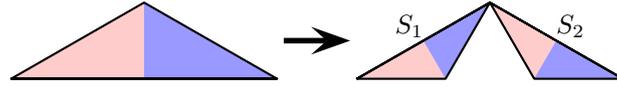}
    }
    \\[4ex]
    \subcaptionbox{The word space up to length $2$ and the attractor of the von Koch curve.\label{fig:KochKurve1-wordspace}}[\textwidth]{
		\includegraphics[page=16]{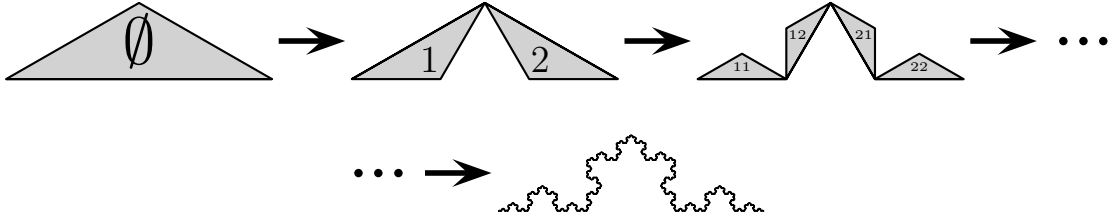}
    }
    \caption{The von Koch curve generated with two similarities and \ref{B2} is fulfilled with an inhomogeneous mass distribution. \ref{A} contains the IFS in a graphical way and the coloring should help to understand the orientation of the small copies.\\
    (B) contains the associated word space (up to length $2$) and the attractor.}
    \label{fig:KochKurve1}
    \end{figure}
    Our minimal example needs at least two small copies, otherwise we are unable to define an IFS. In fact this is possible with the von Koch curve, which is named after the Swedish mathematician Helge von Koch (1870 -- 1924) who introduced this fractal in 1904 \cite{Koch1904}.
    
    The typical IFS which generates the von Koch curve uses normally four small copies and arranges them in such a way that the small copies form the typical tent--form. Besides this there is another IFS which has the von Koch curve as attractor. This IFS consists of the two mappings
    \begin{equation*}
     \begin{aligned}
S_1: \mathbb R^2\to\mathbb R^2, y\mapsto &~\frac1{\sqrt{3}}
                 \begin{pmatrix}
					-\cos\left( \frac{7 \pi}{6}\right)   &   -\sin\left( \frac{7 \pi}{6}\right)\\
					-\sin\left( \frac{7 \pi}{6}\right)   &   \cos\left( \frac{7 \pi}{6}\right)
                 \end{pmatrix}
                 y =\\
&~ \frac1{\sqrt{3}}
                 \begin{pmatrix}
                    \frac{\sqrt{3}}{2}   &   \frac{1}{2}\\
                    \frac{1}{2}   &   - \frac{\sqrt{3}}{2}
                 \end{pmatrix}
                 y,
                \\
S_2: \mathbb R^2\to\mathbb R^2, y\mapsto &~\frac1{\sqrt{3}}
                 \begin{pmatrix}
					-\cos\left( \frac{5 \pi}{6}\right)   &   -\sin\left( \frac{5 \pi}{6}\right)\\
					-\sin\left( \frac{5 \pi}{6}\right)   &   \cos\left( \frac{5 \pi}{6}\right)
                 \end{pmatrix}
                 y + 
\begin{pmatrix}
                   \frac{1}{2}\\
                   \frac{\sqrt{3}}{6}
                    \end{pmatrix}=\\
&~ \frac1{\sqrt{3}}
                 \begin{pmatrix}
                    \frac{\sqrt{3}}{2}   &   - \frac{1}{2}\\
                    - \frac{1}{2}   &   - \frac{\sqrt{3}}{2}
                 \end{pmatrix}
                 y + 
\begin{pmatrix}
                   \frac{1}{2}\\
                   \frac{\sqrt{3}}{6}
                    \end{pmatrix}.
     \end{aligned}
    \end{equation*}
    The two mappings are also presented in Figure \ref{fig:KochKurve1-IFS} in a graphical way, which might be much easier to understand. The coloring of the mappings should help to understand how the small copies are arranged. Figure \ref{fig:KochKurve1} contains further a second figure. The Figure \ref{fig:KochKurve1-wordspace} shows the word space up to length $2$ and further the attractor of the IFS.
    
    The alphabet consists obviously of $\mathcal A=\{1,2\}$ and the word space of all words $w=w_1w_2\dots w_n$ with $w_i\in\mathcal A$. We apply Proposition \ref{prop:simplB2} and hence the equation
    \begin{equation}
     m(12) = m(21)\label{eq:bsp:KochKurve1-eq1}
    \end{equation}
    has to be fulfilled. Since $m$ is multiplicative, it follows that \eqref{eq:bsp:KochKurve1-eq1} is always fulfilled. So, the only restriction is the fact that $m$ has to be a mass distribution with 
    \begin{equation*}
    m(1) + m(2) = 1.
    \end{equation*}
    Therefore it follows that we can choose $m(1)\in(0,1)$ and $m(2)=1-m(1)$. Of course we could also choose $m(2)\in(0,1)$ and determine $m(1)$.
    
    Thus we found an inhomogeneous fractal which fulfills \ref{B2}. The attractor is also a ``real'' fractal, since the Hausdorff dimension equals $\frac{\ln 4}{\ln 3}\approx \numprint{1.262}$. We also note that this fractal is p.c.f..                   
\end{bsp}

As a logical consequence we can ask for a minimal example which won't fulfill \ref{B2}.

\begin{bsp}[Minimal inhomogeneous fractal not fulfilling \ref{B2}]
    \begin{figure}
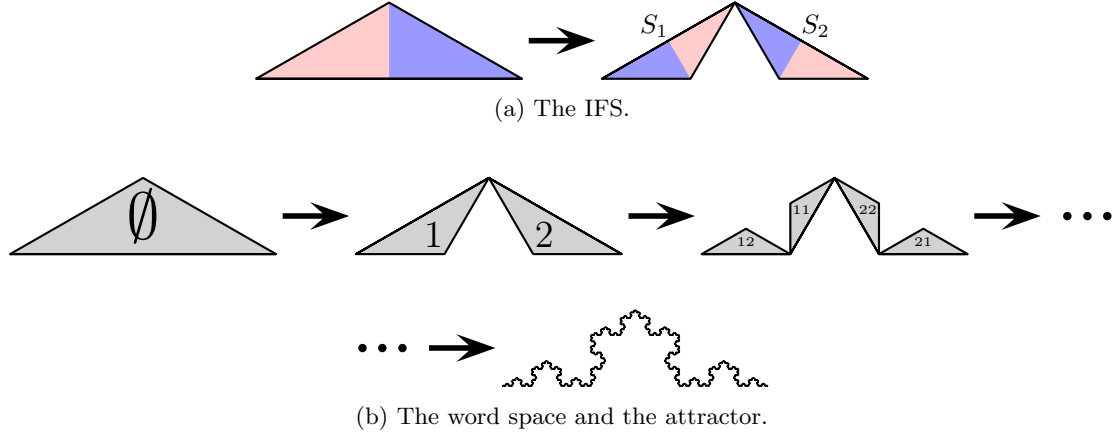

    \centering
    \subcaptionbox{The IFS.\label{fig:KochKurve2-IFS}}[\textwidth]{
		\includegraphics[page=17]{graphics.pdf}
    }
    \\[4ex]
    \subcaptionbox{The word space and the attractor.\label{fig:KochKurve2-wordspace}}[\textwidth]{
		\includegraphics[page=18]{graphics.pdf}
    }
    \caption{An IFS (and its word space) generating the von Koch curve, but which fulfill \ref{B2} only in the homogeneous case.}
    \label{fig:KochKurve2}
    \end{figure}
    For this minimal counter--example we use again the von Koch curve, which we already introduced in the previous Example \ref{bsp:minB2}. For the counter--example we arrange the small copies in a different way. We choose the mappings as 
    \begin{equation*}
        \begin{aligned}
        S_1: \mathbb R^2\to\mathbb R^2, y\mapsto &~- \frac1{\sqrt{3}}
            \begin{pmatrix}
                \cos\left( \frac{\pi}{6}\right)   &   -\sin\left( \frac{\pi}{6}\right)\\
                \sin\left( \frac{\pi}{6}\right)   &   \cos\left( \frac{\pi}{6}\right)
            \end{pmatrix}
            y + 
            \begin{pmatrix}
                \frac{1}{2}\\
                \frac{\sqrt{3}}{6}
            \end{pmatrix}=\\
        &~ - \frac1{\sqrt{3}}
            \begin{pmatrix}
                \frac{\sqrt{3}}{2}   &   - \frac{1}{2}\\
                \frac{1}{2}   &   \frac{\sqrt{3}}{2}
            \end{pmatrix}
            y + 
            \begin{pmatrix},
                \frac{1}{2}\\
                \frac{\sqrt{3}}{6}
            \end{pmatrix}\\
        S_2: \mathbb R^2\to\mathbb R^2, y\mapsto &~- \frac1{\sqrt{3}}
            \begin{pmatrix}
                \cos\left( - \frac{\pi}{6}\right)   &   -\sin\left( - \frac{\pi}{6}\right)\\
                \sin\left( - \frac{\pi}{6}\right)   &   \cos\left( - \frac{\pi}{6}\right)
            \end{pmatrix}
            y + 
            \begin{pmatrix}
                1\\
                0
            \end{pmatrix}=\\
        &~ - \frac1{\sqrt{3}}
            \begin{pmatrix}
                \frac{\sqrt{3}}{2}   &   \frac{1}{2}\\
                - \frac{1}{2}   &   \frac{\sqrt{3}}{2}
            \end{pmatrix}
            y +
            \begin{pmatrix}
                1\\
                0
            \end{pmatrix}
        \end{aligned}
    \end{equation*}
    which is also illustrated in Figure \ref{fig:KochKurve2-IFS}. The second Figure \ref{fig:KochKurve2-wordspace} contains the corresponding word space and again the attractor. The intention of this IFS is the fact that the cells ``11'' and ``22'' touch each other and are therefore equivalent.
    
    By Proposition \ref{prop:simplB2} the equation
    \begin{equation*}
        m(11)=m(22)
    \end{equation*}
    has to be fulfilled. For $m(1)>0$ and $m(2)>0$ this is only fulfilled in the case of $m(1) = m(2)$. This means that an inhomogeneous mass distribution on this IFS can not fulfill \ref{B2}.
    
    This example is a valid minimal example using the same arguments as in Example \ref{bsp:minB2} and is also p.c.f..
    
\end{bsp}
In total we get two examples, which fulfill our criteria of a minimal example. Thereby, one example fulfills \ref{B2} and the other example does not fulfill \ref{B2}.

As a next step we want to consider so called nested fractals, which are characterized as follows:
\begin{defi}[\cite{Hambly2000}]
	\label{defi:nested}
	We want to denote by $F_0'\defgl\{q_i: S_i(q_i)=q_i\}$ the set of all fixed points of the similarities $S_i$. Further we want do define the set of all essential fixed points $F_0$ by $F_0\defgl\{x\in F_0': \exists i, j\in\mathcal A,  y \in F_0', x\neq y\text{ st. } S_i(x)=S_j(y)\}$.
	A fractal $K$ is then called nested, if it satisfies:
	\begin{enumerate}
		\item Connectivity: For any $1$-cells $C$ and $C'$, there is a sequence $\{C_i: i=0,\dots, n\}$ of $1$-cells such that $C_0=C, C_n = C'$ and $C_{i-1}\cap C_i\neq \emptyset, i=1,\dots, n$.
		\item Symmetry: If $x,y\in F_0$, then reflection in the hyperplane $H_{xy}=\{Z:|z-x|=|z-y|\}$ maps $S^n(F_0)$ to itself.
		\item Nesting: If $v,w\in\mathcal W$ with $v\neq w$, then 
		\begin{equation*}
			S_v(K)\cap S_w(K) = S_v(F_0)\cap S_w(F_0)
		\end{equation*}
		\item Open set condition OSC: There is a non-empty, bounded, open set $\mathcal O$ such that the $S_i(\mathcal O)$ are disjoint and $\bigcup_{i=1}^NS_i(\mathcal O)\subseteq \mathcal O$.
	\end{enumerate}
\end{defi}
Those nested fractals form a class of fractals with some nice properties. One of those properties is the following: if the fractal is nested, then the equivalence relation is indeed equivalent \cite[Prop. 2.9]{FK2019}. Since we only want to consider fractals with such an equivalence relation, this is a nice pre--condition.\\
Unfortunately, the property ``nested'' does not imply that \ref{B2} is fulfilled in the inhomogeneous case, which shows the next example.
\begin{bsp}
	\label{bsp:SG3-nested}
	Let us consider the so called 3-level Sierpi\'nski gasket, shorten by SG$_3$. This is a modification of the normal Sierpi\'nski gasket and consists of six copies with contraction ratios $\frac13$. Those are arranged in such a way that they form again a triangle. In Figure \ref{fig:SG3-nested} this is also visualized. Again, we could flip and rotate the smaller copies, but for now we want to consider the case with no flipping or rotating. The associated word space is included in Figure \ref{fig:SG3-nested}. It holds that in this case the SG$_3$ is nested, where $F_0$ consists of the edges of the starting triangle.
	\begin{figure}
		\centering
		\includegraphics[page=19]{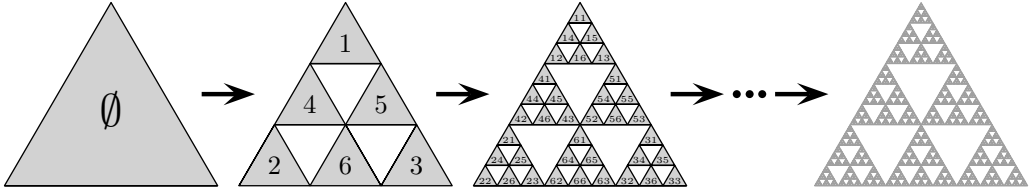}
		\caption{The 3-level Sierpi\'nski gasket, which is generated in a same way as the Sierpi\'nski gasket, but with 6 smaller copies. This SG$_3$ has no rotations or flippings.}
		\label{fig:SG3-nested}
	\end{figure}
	By Proposition \ref{prop:simplB2} we only have to consider the relations $12\sim 41$, $13\sim 51$, $21\sim42$, $23\sim62$, $31\sim53$, $32\sim63$, $43\sim52\sim61$. We can set up all needed equations and become
	\begin{equation*}
		\begin{split}
			& m(12)=m(41)\\
			& m(13)=m(51)\\
			& m(21)=m(42)\\
			& m(23)=m(62)\\
			& m(31)=m(53)\\
			& m(32)=m(63)\\
			& m(43)=m(52)=m(61)\\
			& m(1) + m(2) + m(3) + m(4) + m(5) + m(6) = 1
		\end{split}
	\end{equation*}
	We can apply the fact that $m(ab) = m(a)m(b)$ holds, shorten the equations and receive
	\begin{equation*}
		\begin{split}
			& m(2)=m(4)\\
			& m(3)=m(5)\\
			& m(1)=m(4)\\
			& m(3)=m(6)\\
			& m(1)=m(5)\\
			& m(2)=m(6)\\
			& m(43)=m(52)=m(61)\\
			& m(1) + m(2) + m(3) + m(4) + m(5) + m(6) = 1.
		\end{split}
	\end{equation*}
	As we can see, it must hold that $m(a) = \frac16$ for $a\in\mathcal A$, which implies that we are not able to choose an inhomogeneous mass distribution and at the same time \ref{B2} is fulfilled.\\
	Thus this is a suitable counter example for the fact, that the property ``nested'' cannot imply the fact that \ref{B2} is fulfilled in the inhomogeneous case.
\end{bsp}
In the following we want to show, that \ref{B2} is not to restrictive. In particular we want to consider inhomogeneous fractals, which have words $\hat w$ with $R(\hat w)>2$. We split this into two examples, since the first example is a special example with $R(\hat w) = 3$ and the second example is more general with $R(\hat w) = n, n\geq 4$.
\begin{bsp}
	\label{bsp:SG3-1fp}
	For an example with $R(\hat w)=3$ we can use the 3-level Sierpi\'nski gasket, which we already introduced in the previous Example \ref{bsp:SG3-nested}. In contrast to this example we want to rotate some copies, but won't flip. If we rotate $S_2$ and $S_3$ by $-\frac23\pi$ and $S_6$ by $\frac23\pi$ we get a different word space, which is illustrated in Figure \ref{fig:SG3-1fp}. %
	\begin{figure}
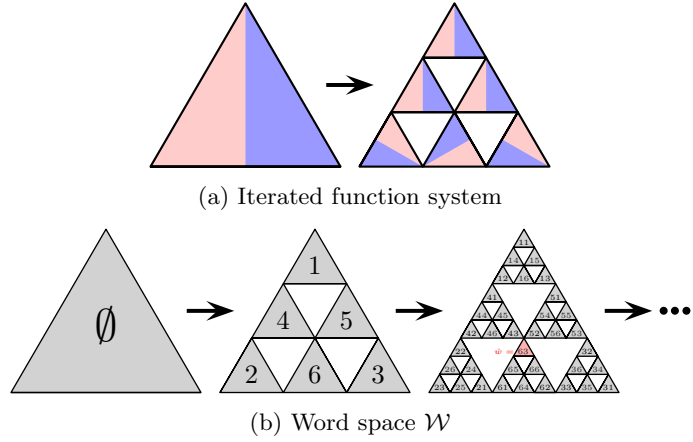

		\centering
		\begin{subfigure}[b]{0.9\textwidth}
			\centering
			\includegraphics[page=20]{graphics.pdf}
			\caption{Iterated function system}
		\end{subfigure}
		\\[1ex]
		\begin{subfigure}[b]{0.99\textwidth}
			\centering
			\includegraphics[page=21]{graphics.pdf}
			\caption{Word space $\mathcal W$}
		\end{subfigure}
		\caption{The Sierpi\'nski gasket with 6 copies and rotations, which has a cell $\hat w=63$ with $R(\hat w)=3$ and $\hat w$ fulfills \ref{B2}.}
		\label{fig:SG3-1fp}
	\end{figure}
	As a first observation we note, that $\sim$ forms still an equivalence relation. Further we can choose the mass of the cell $1$ with $m(1)\in(0,1)$. For all other cells it holds, that $m(a) = \frac{1-m(1)}5$ for $a\in\{2,\dots, 6\}$.\\
	It then holds, that \ref{B2} is fulfilled for all $w\in\mathcal W$. For this, we can analyze words of length $2$, which will not fulfill the property \ref{B2} by $w^-\sim (\tilde w)^-$. Thus they must fulfill $m(w) = m(\tilde w)$ for all $\tilde w\sim w$, which shows the following list:
	\begin{align*}
		m(12)&=m(41)\\
		m(42)&=m(22)\\
		m(21)&=m(61)\\
		m(62)&=m(33)\\
		m(32)&=m(53)\\
		m(51)&=m(13)\\
		m(43)&=m(52)=m(63)
	\end{align*}
	For a word of length greater 2 we can differ between two cases. Either $w$ fulfills $w^-\sim(\tilde w)^-$ for all $\tilde w\sim w$. In this case \ref{B2} is fulfilled. In the other case $w$ can be expressed as $w=uab$ and $\tilde w=ucd$ with $u\in\mathcal W$ and $a,b, c, d\in\mathcal W$. It then holds, that we can apply the results for words of length $2$, where $m(ab) = m(cd)$ holds. Thus $m(w) = m(uab) = m(ucd) = m(\tilde w)$ follows and \ref{B2} is fulfilled for all $w\in\mathcal W$.\\
	For now it could be possible, that this example can be modified in such a way, that we can choose the mass of two cells arbitrarily. As we will later see in section \ref{sec:algo}, there is no way of generating the SG$_3$ with six copies and two or more free parameters.
\end{bsp}
\begin{bsp}[$n$-diamond propeller]
	The previous Example \ref{bsp:SG3-1fp} already gives a good idea, how we could construct an example (in $\mathbb R^2$) with a cell $\hat w$ and $R(\hat w)=n$. Basically, we just need $n$ touching copies of the fractal in one single point $p$.  We can achieve this, if we choose our basic form to be a diamond and we want to call this example the \emph{$n$-diamond propeller}.\\
	Since we do not want overlapping copies the angle in one corner of the diamond must be sufficiently small. %
	For now, we want to denote this angle by $\tilde\alpha$. We notate it with a tilde, since we have to modify this angle and denote later the final angle by $\alpha$.\\
	Let us fix $n\geq 4$ as the number of touching copies in $p$. If we choose $\tilde\alpha\leq \frac{\pi}{n}$, we can arrange the copies in such a way, that they only touch in $p$. As it turns out, it is even more practically, if we choose 
	\begin{equation*} 
		\alpha_0\defgl  \pi\left(4\left\lceil\frac{n}4\right\rceil\right)^{-1} \leq \tilde\alpha,
	\end{equation*}
	where $\lceil\cdot\rceil$ is the ceiling function.\\
	This allows us, that four copies form a ``+'' in $p$. We can then choose the contraction ratio $c_0$ of all similarities as 
	\begin{equation*}
		c_0\defgl\frac12\tan\left(\frac{\alpha_0}2\right).
	\end{equation*}
	This implies, that the smaller copies just fit horizontal in the original diamond.\\
	In the next step we add at the top and the bottom of the diamond two additional copies, which guarantees us, that the shape of the OSC--set $\mathcal O$ is (at least) diamond--like.\\
	In the last construction step we want to connect those outer cells with the inner cells. We want to avoid copies with improper rotations, instead they should form a proper line. For this, we have to choose the contraction ratio smaller and it should be a multiple of four, such that we set the new contraction ratio $c$ as 
	\begin{equation*} 
		c\defgl \left(4\left\lceil\frac1{4c_0}\right\rceil\right)^{-1}.
	\end{equation*}
	As we still want to hold on the property that the smaller $n$ copies fit exactly in the diamond, we also have to adjust $\alpha$ and set 
	\begin{equation*} 
		\alpha\defgl 2\arctan(2c). 
	\end{equation*}
	This also implies, that we have to modify our OSC--set $\mathcal O$ slightly. We choose the union of the diamond and a circle around $p$ with radius $c$ as new set $\mathcal O$. The different steps of our construction can also be seen in Figure \ref{fig:diamond1}.\\
	\begin{figure}
		\centering
		\includegraphics[page=22]{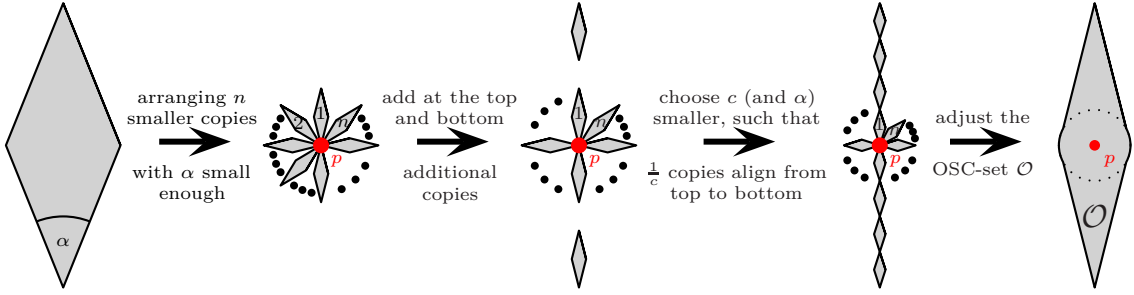}
		\caption{Idea of the construction of a fractal with a cell $w$ and $R(w) = n$.}
		\label{fig:diamond1}
	\end{figure}%
	Let us now consider the word space. For this, we denote the cells in line from top to bottom by 
	\begin{equation*}
		1, \dots, n_0
	\end{equation*}
	with $n_0\defgl \frac1c$. The remaining cells arranged in a circle around $p$ should be denoted by 
	\begin{equation*}
		n_0+1,\dots, N
	\end{equation*}
	where $N = n_0-2+n$ holds. We want to orientate the copies from top to bottom in such a way, that two neighboring cells $u$ and $v$ touch in $u1$ and $v1$ respectively $un_0$ and $vn_0$, except the cell $n_0$. The cell $n_0$ is not rotated and it should hold, that $n_01$ intersects with the cell $(n_0-1)n_0$.\\
	Since $n_0$ is a multiple of four, it holds, that the cells $\frac{n_0}2$ and $(\frac{n_0}2+1)$ meet exactly in $p$. Moreover their orientation is in the way, that they intersect in $(\frac{n_0}2)1$ and $(\frac{n_0}2+1)1$. The remaining copies are accumulated around $p$. For such a copy $w$ it should hold that $w1$ intersects with the other cells. Figure \ref{fig:diamond2} should help to understand this word space.\\
	\begin{figure}
		\centering
		\includegraphics[page=23]{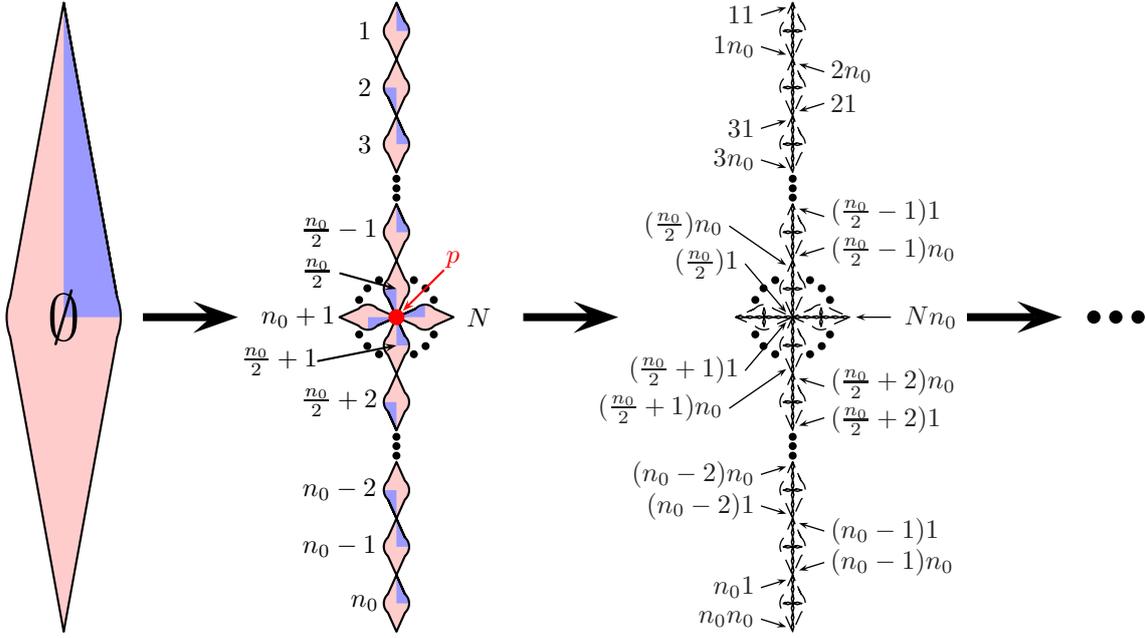}
		\caption{The fractal with its word space $\mathcal W$. The coloring should help to understand how the smaller copies are orientated. They are not flipped over, but could be flipped.}
		\label{fig:diamond2}
	\end{figure}%
	We can quickly notice, that $\sim$ forms an equivalence relation since there are only two types of touching cells. Either the cells touch in $p$ (or iterations of it) or they touch in the line from top to bottom (and also in iterations of it). Both cases are harmless. For simplicity we only want to consider cells with word length $2$.\\
	In the first case we get that all copies intersect in $p$ and thus $\sim$ is transitive. Further we have for such a cell $w$ the property $R(w)=n$. In the other case we only have cells with $R(w)=2$, which immediately guarantees that $\sim$ is transitive, thus $\sim$ is an equivalence relation.\\
	Our aim is that this fractal fulfills \ref{B2} and the mass distribution is inhomogeneous. If we choose a mass distribution with $m(1), \dots, m(N)\in(0,1)$ arbitrarily and $\sum_{a=1}^Nm(a)=1$ this is in general wrong. It turns out, that some relations must hold on the mass distribution. For this we take a closer look at words of length $2$.\\
	First, let us consider the cells around $p$. Those cells are coded by $a1$ and in particular we want to consider two cells 
	\begin{equation*}
		w_1 = a_11\text{\qquad and\qquad}w_2=a_21
	\end{equation*}
	with $a, a_1, a_2\in\{\frac{n_0}2,\frac{n_0}2+1,n_0+1,\dots, N\}$.\\
	It holds that $w_1\sim w_2$ and $w_1^-\not\sim w_2^-$ is true. Thus $w_1$ and $w_2$ have to fulfill the first condition of \ref{B2} which implies 
	\begin{equation*}
		m(a_11) = m(w_1)= m(w_2) = m(a_21)
	\end{equation*}
	and further $m(a_1) = m(a_2)$.\\
	This has to be valid for all words around $p$ and we get, that 
	\begin{equation*}
		m(a) = m(N)
	\end{equation*}
	must hold for all $a\in\{\frac{n_0}2,\frac{n_0}2+1,n_0+1,\dots, N\}$.\\
	As a second case we consider the cells from top to bottom. Those cells are coded by 
	\begin{equation*}
		b1\text{\qquad and\qquad}bn_0
	\end{equation*}
	with $b\in\{1,\dots, n_0\}$.\\
	By construction it holds that two cells $w_1$ and $w_2$ intersect in the way, that 
	\begin{equation*}
		w_1=b_1d\text{\qquad and\qquad}w_2= (b_1+1)d
	\end{equation*}
	with $b_1\in\{1,\dots, n_0-2\}$ and $d\in\{1,n_0\}$.\\
	Again the first condition of \ref{B2} has to be fulfilled, which implies 
	\begin{equation*}
		m(b_1d) = m(w_1) = m(w_2) = m((b_1+1)d)
	\end{equation*}
	and thus 
	\begin{equation*}
	m(b_1) = m((b_1+1))
	\end{equation*}
	must hold. By induction it follows, that 
	\begin{equation*}
		m(b_1) = m(1)
	\end{equation*}
	must hold for $b_1\in\{1,\dots, n_0-1\}$.\\
	We are now nearly finished. The last two equivalent words we have to consider are
	\begin{equation*}
		w_1=(n_0-1)n_0\text{\qquad and\qquad}w_2=n_01.
	\end{equation*}
	By the previous results we have, that
	\begin{equation*}
		m(w_1) = m((n_0-1)n_0) = m((n_0-1))m(n_0) = m(1)m(n_0)
	\end{equation*}
	must hold. This implies also that
	\begin{equation*}
		m(w_1) = m(w_2)
	\end{equation*}
	holds independent of the choice of $m(1)$ and $m(n_0)$ and \ref{B2} is fulfilled.\\
	Thus we can choose 
	\begin{equation*}
		m(n_0)\in(0,1)\text{\qquad and\qquad}m(b_1) = \frac{1-m(n_0)}{N-1}
	\end{equation*}
	for $b_1\in\{1,\dots, n_0-1\}$ and at the same time \ref{B2} holds. This gives us an example with an inhomogeneous mass distribution and cells with $R(w)=n\in\mathbb N$.
\end{bsp}%
The examples of this section show some interesting facts and allow us to understand the condition \ref{B2} in a more deeper way. At the same time those examples demonstrate in a clear way that \ref{B2} is technically a proper condition, but in practice very complex.
%
%
%
%
\section{Analyze \ref{B2} with an algorithm}
\label{sec:algo}
As we have already seen in section \ref{sec:SG2}, we can create the Sierpi\'nski gasket with several various iterated function systems. Depending on the chosen function system, it varies, how many weights can be chosen and how many weights are fixed. This depends purely on the topology, while the attractor of the IFS stays the same although the distribution of the weights differ.\\
This fact raises the question how many weights we can choose, if we fix the attractor but vary at the same time the iterated function systems. At the same time we can ask how the number of free weights is distributed. It is clear that this is bounded by $N-1$ parameters, since the weights have to sum up to $1$ and thus the last chosen weight $m(a)$ has to be
\begin{equation*}
 m(a) = 1 - \sum_{i=1, i\neq a}^{N} m(i).
\end{equation*}
This implies that $N-1$ free weights are the optimal case and it is by now not clear, that this can be achieved for every attractor of an IFS (for the Sierpi\'nski gasket we already know this).\\
It seems to be impossible to answer this question in general for all possible fractals (or a wide class of fractals like all p.c.f.-fractals or all nested fractals) as this depends on each topology of every fractal. But whats about a single fractal? Can we answer this question for a given fractal positive? For the Sierpi\'nski gasket section \ref{sec:SG2} answers this question positive, since we found for every case an example. If we take a look at the 3-level Sierpi\'nski gasket SG$_3$ this question seems to be quite harder. We get a lot of equations which have to be fulfilled and we cannot solve them easily. So it seems to be quite hopeless to find easily suitable examples for every number of free weights despite the fact that they may not exist. The big problem is that we would have to check every possible IFS by hand and calculate how many free weights are possible. This may be possible for the Sierpi\'nski gasket where we would have to handle $6^3 = 216$ different iterated function systems, but if we take a look at the SG$_3$, this seems to be a big task since there are already $6^6 = \numprint{46656}$ possible iterated function systems. This problem gets even more harder, if we consider other fractals with more possible mappings or with a larger alphabet.\\
For this reason we want to develop a computer algorithm\footnote{implemented in \texttt{Python 3.6}, see\cite{Python}}, which calculates us the number of free parameters to all possible iterated function systems with the same attractor. Based on this we are also able to gather how the free parameters are distributed. 
\bigskip%
\\%
The big idea of our algorithm is the following: we fix a particular fractal and put IFS, which are equivalent under rotating or flipping into an equivalence class. After this, we go through all equivalence classes and pick one representant. For each representant we set up the equations which have to be fulfilled. Since there are typically more equations than parameters, we simplify the equations up to a certain point. We then apply the implicit function theorem and receive as the number of free weights the dimension of the subspace, which we save (e.g. in a file). After this, we continue with the next IFS until we finished.%
\bigskip%
\\%
Our first step is to fix a certain fractal with $N$ similarities and an alphabet $\mathcal A = \{1,\dots, N\}$. This could be for example the Sierp\'nski gasket, the 3-level Sierpi\'nski gasket or any other fractal. We have to check by hand, if the equivalence relation is in fact equivalent according to Assumption \ref{A}. Luckily, we only have to check this for one particular realization, since all other realizations are permutations of the word space which won't effect the property of equivalence.\\
Further we have to check, if the homogeneous case solves the problem. This has it origin in \ref{B1}, where we stated that the Martin kernel in the homogeneous case exists. This has also checked only once, since the weights are equal and the equations stay the same under renaming of the variables.%
\bigskip%
\\%
In the next step we have to put all IFS into equivalence classes, since this saves a lot of calculating time. For a better understanding let us start with two small examples.
\begin{bsp}
	\label{bsp:SG2-mapped-1}
	Let us consider the Sierpi\'nski gasket. %
	\begin{figure}
		\centering%
		\includegraphics[page=24]{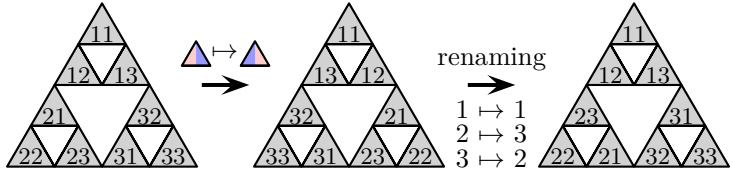}
		\caption{Consider a certain word space of the Sierpi\'nski gasket. We map this word space by flipping the whole fractal (middle picture). We can rename the word space in that way, that the top triangle is called ``1'', the left triangle ``2'' and the right triangle ``3''. With this we receive the right figure which represents an other word space as the starting word space.}
		\label{fig:SG2-mapped1}
	\end{figure}%
	\begin{figure}
		\centering%
		\includegraphics[page=25]{graphics.pdf}
		\caption{Consider a certain word space of the Sierpi\'nski gasket. We map this word space by flipping the whole fractal and rotate it by $\frac23\pi$ (middle picture). We can rename the word space in that way, that the top triangle is called ``1'', the left triangle ``2'' and the right triangle ``3''. With this we receive the right figure which represents the same word space we started with.}
		\label{fig:SG2-mapped2}%
	\end{figure}%
	We want to take a closer look at the iterated function system from Example \ref{bsp:SG2-1fp}. In this case the top triangle were shrank, the bottom left was also shrank and the bottom right triangle was rotated by $\frac23\pi$ and flipped. The word space (of the second depth) can be represented by the list
	\begin{equation*}
		[[11,12,13],[21,22,23],[32,33,31]].
	\end{equation*}
	and can also be found in Figure \ref{fig:SG2-mapped1}.\\
	Let us now consider what happens, if we just flip the whole fractal along the $y$-axis without any rotation. After this, the word space can be represented by
	\begin{equation*}
		[[11,13,12],[32,33,31],[21,23,22]].
	\end{equation*}
	We can rename the labels in such a way, that the top triangle starts with ``1'', the bottom left with ``2'' and the bottom right with ``3''. For this, we rename ``2'' to ``3'' and ``3'' to ``2''. The label of ``1'' stays the same. If we do so, we receive
	\begin{equation*}
		[[11,12,13],[23,22,21],[31,32,33]].
	\end{equation*}
	This is also illustrated in Figure \ref{fig:SG2-mapped1}. As we can see, we receive a different word space. The equations on this word space are however the same. For this reason we only have to find the number of free parameters of the first word space to receive the number of free parameters of the second word space.
\end{bsp}
For a deeper understanding let us consider another mapping of the whole word space.
\begin{bsp}
	\label{bsp:SG2-mapped-2}
	Again, we consider the Sierpi\'nski gasket and use the same word space 
	\begin{equation*}
		[[11,12,13],[21,22,23],[32,33,31]]
	\end{equation*}
	as in Example \ref{bsp:SG2-mapped-1} as starting word space. This time we rotate the fractal by $\frac23\pi$ and flip the fractal over. We receive
	\begin{equation*}
		[[22, 21, 23], [12, 11, 13],[31,32,33]].
	\end{equation*}
	Let us again rename the word space. We rename ``1'' to ``2'', ``2'' to "1" and ``3'' does not change. After renaming we receive
	\begin{equation*}
		[[11,12,13],[21,22,23],[32,33,31]]
	\end{equation*}
	and in Figure \ref{fig:SG2-mapped2} this is also illustrated. If we compare this with our starting word space, we see that they are identical. Thus this mapping will not generate an other representant in the equivalence class.\\
	Indeed this equivalence class consists in total of three different word spaces. The last missing one is
	\begin{equation*}
		[[11, 13, 12], [21, 22, 23], [31,32,33]]
	\end{equation*}
	which we would receive if we rotate the starting word space by $\frac23\pi$ but without flipping.\\
	From Example \ref{bsp:SG2-1fp} we then know that all these three different word spaces have exactly one free parameter.
\end{bsp}
Examples \ref{bsp:SG2-mapped-1} and \ref{bsp:SG2-mapped-2} give already a glue how we can put all word spaces into equivalence classes. We simply have to apply all possible rotations and flippings onto a starting word space and rename it afterwards. Since it is nearly impossible to predict which word spaces are put into equivalence classes we do the following: we go through all possible IFS and determine their equivalence class. After this, we only consider unique equivalence classes and drop all duplicates. This is also summed up in Listing \ref{lst:PseudoCodeEqClasses} as a pseudo algorithm. 
\begin{lstlisting}[style=pseudocode, caption={Pseudo algorithm for determine all equivalence classes}, label={lst:PseudoCodeEqClasses}]
*(set\_sw*) := *(set of all possible single word spaces*)
for *(single\_word\_space*) in *(set\_sw*):
    for *(each possible mapping*):
        *(mapped\_sw := apply the mapping onto single\_word\_space*)
        *(new\_sw := rename mapped\_sw*)
    *(save new\_sw as part of the equivalence class*)
*(determine all unique equivalence classes*)
\end{lstlisting}

In fact, we do not have to put the IFS into equivalence classes. But since the calculation of the free parameters takes a lot of computing time, this reduction is very welcome, since we only have to consider between 8-12$\%$ of all cases, which depends on the fractal. Further the calculation time for determine all equivalence classes is relatively short.

In a next step we want to go through all equivalence classes of iterated function systems. Since we do the same thing for every iterated function system, we want to fix a particular IFS and keep in mind that we want to iterate over all IFS. So, the next steps will be done for every IFS.\\
We can set up a list of equations, which have to be fulfilled regarding to Proposition \ref{prop:simplB2}. Every equation is of the form of
\begin{equation*}
	m(v_1)m(v_2)\cdots m(v_n) = m(w_1)m(w_2)\cdots m(w_n)
\end{equation*}
for equivalent words $v\sim w$ with $v= v_1\dots v_n$ and $w=w_1\dots w_n$.\\
This list of equations has to be replenished by
\begin{equation*}
	\sum_{a=1}^N m(a) - 1 = 0.
\end{equation*}
We can now solve those equations with a computer algebra system. For this we can use for example the \texttt{Python}-package \texttt{SymPy} (see \cite{SymPy}), which can solve also non--linear systems. Unfortunately \texttt{SymPy} returns on some IFS an error and says, that the list of equations is not solvable by its build--in function \texttt{nonlinsolve}. As a first consequence of this, we want to simplify the equations in parts on our own. For this, we have several possibilities, which we will discuss in the following methods.
\begin{method}[``delete double equations'']
	\label{method:dde}
	If two equations are identical, we can reduce our equation list by one of those equations and consider only the reduced equation list. For example we can reduce
	\begin{align*}
		&m(a) m(b) = m(c) m(d)\\
		&m(c) m(d) = m(a) m(b)
	\end{align*}
	to
	\begin{equation*}
		m(a) m(b) = m(c) m(d).
	\end{equation*}
\end{method}
This method seems to be quiet trivial, nevertheless one should mention this method. In addition this allows us later to reference to it and we may build up a more complex algorithm using this besides the following methods.
\begin{method}[``delete factorial variables'']
	\label{method:dfv}
	We can shorten any factor $m(i), i\in\mathcal A$, which occurs on both sides, since $m(i)>0$. For example we can replace
	\begin{equation*}%
		m(a)^km(b) = m(a)^lm(c)\text{ with } k,l\in\mathbb{R}
	\end{equation*}%
	by
	\begin{equation*}
		m(b)= m(a)^{l-k}m(c).
	\end{equation*}
\end{method}
This allows us to shorten unnecessary factors without touching the statement of the equation. This fact is essential, since we only want to simplify our equations for \texttt{SymPy}. The next method will also preserve the statement.
\begin{method}[``replace polynomial terms'']
	\label{method:rpt}
	We can extract roots or exponents, if it occurs on one side with the same exponent. This is possible, since $m(a) >  0$ for all $a\in\mathcal A$ must hold. %
	For example we can replace 
	\begin{equation*}
		m(a)^k = m(b)m(c)^2\text{ with }k\in\mathbb{R}\backslash\{0\}
	\end{equation*}
	by 
	\begin{equation*}
		m(a) = \left(m(b)m(c)^2\right)^{-k}
	\end{equation*}

\end{method}
The purpose of this method is the observation that \texttt{SymPy} has several problems with handling exponents, especially with rational exponents. This method seems to be counterproductive, since we modify the equations in the way, that (almost surely) rational exponents occur. The following method exploits another side effect, namely the occurrence of an isolated variable at one side.
\begin{method}[``substitute variables'']
	\label{method:sv}
	We can substitute variables, if they are isolated on one side and the exponent equals $1$ (which we can achieve by Method \ref{method:rpt}). For example we can replace
	\begin{align*}
		m(a)m(b)& = m(c)m(d)\\
		m(a) &= \sqrt{m(e)m(f)}
		\intertext{by}
		\sqrt{m(e)m(f)}m(b) &= m(c)m(d)\\
		m(a) &= \sqrt{m(e)m(f)}.
	\end{align*}
	Further we know that the substituted variable cannot be free and we reduced our problem by one dimension.
\end{method}
All these introduced methods are interesting, but alone they are not really useful. Thus we want to combine them and make them a very powerful weapon to simplify our equations.
\begin{method}[``simplify equations'']
	\label{method:se}
	We combine the Methods \ref{method:dde} - \ref{method:sv} to the following recursive method, called simplify equations.\\
	We start with all equations and call them free equations. Further we start with free variables, which are all variables. For the recursion we implement the fix equations and fix variables as empty.
	\begin{enumerate}
		\item\label{method:se:item1} replace in free equations all polynomial terms (by Method \ref{method:rpt})
		\item delete in free equations all  factorial variables (by Method \ref{method:dfv})
		\item delete in free equations all  double equations (by Method \ref{method:dde})
		\item if we can substitute a free variable in free equations (by Method \ref{method:sv}), substitute the variable and save it as a fix variable (and delete it from free variables). Further add the equation defining the variable as fix equation and delete it from free equations. After this, start again at \ref{method:se:item1}.\\
		Otherwise we are done.
	\end{enumerate}
	With this method we receive a list of free equations (potentially empty), a list of fix equations (defining the fix variables), a list of free variables and a list of fix variables.
\end{method}
After we apply Method \ref{method:se} to our equations we receive in most times an empty list of free equations. Further is the list of fix equations and the list of fix variables of the same length, since every equation in fixed equations provides a definition of a fixed variable. If the list of free equations is empty it is clear, that we can choose for every variable in the list of free variables a different value, where maybe some restrictions must hold. Nevertheless we can choose them independently from each other and those variables span up a subspace of $(0,1)^N$.\\
Since we are only interested in the dimension of this subspace (where the dimension equals the number of free parameters) we have to consider a mathematical way to verify that those free variables form a proper subspace.\\
For this, we want to apply the implicit function theorem. To do so, let us first fix some notations.
\begin{defi}
	\label{defi:nullstellenproblem}
	Let us fix a specific IFS, its word space and its starting equations. Let us apply Method \ref{method:sv} on the starting equations.\\
	For simplicity let us denote the free variables by $x_1,\dots x_k$ and the fixed variables by $y_1,\dots y_m$. These variables representing one specific $m(a)$ with $a\in\mathcal A$ and $k+m=N$ must hold. Further we want to note by $x\defgl (x_1,\dots, x_k)$ and $y\defgl (y_1,\dots, y_m)$ those variables combined as a vector.\\
	We can then write the list of all $M$ equations (the combination of free and fixed equations) as
	\begin{equation}
		\begin{gathered}
			g_1(x,y) = h_1(x,y)\\
			\vdots\\
			g_{M}(x,y) = h_{M}(x,y)
		\end{gathered}\label{eq:defi:nullstellenproblem-1}
	\end{equation}
	where $g_i(x,y)$ and $h_i(x,y)$ are polynomials in $x_1,\dots, x_k, y_1, \dots, y_m$\\
	We can define $F_i(x,y) \defgl g_i(x,y) - h_i(x,y)$ for $i=1,\dots, M$ and rewrite equations \eqref{eq:defi:nullstellenproblem-1} as a single function by
	\begin{equation*}
		F:(0,1)^k\times(0,1)^m \to \mathbb{R}^{M}, F(x,y) \defgl \begin{pmatrix}F_1(x,y)\\\vdots\\F_{M}(x,y)\end{pmatrix}
	\end{equation*}
	Finally we are interested in solutions with 
	\begin{equation*}
		F(x,y) = 0
	\end{equation*}
	and moreover to maximize $m$ and verify, that $m$ can be chosen maximal. $m$ is then the number of free parameters.
\end{defi}
With this notation it is much more clearer, what we want to do. Further this notation implies already an idea, in which direction our further considerations can go.\\
First of all, let ut notice that always $m \leq M$ holds. This comes from the fact, that for every variable $y_i$ an equation exists, namely in fix variables. Thus we can split up $M$ into $M = M_1 + m$, where $M_1$ equals the number of equations in free equations and $m$ the number of equations in fix equations.\\
If further $M_1=0$ and thus $M=m$ holds, we can apply the implicit function theorem.
\begin{prop}[Application of the implicit function theorem, cf. \cite{Forster}]
	\label{prop:applIFT}
	Consider a specific IFS. Let us denote by $F$ the simplified equations as in Definition \ref{defi:nullstellenproblem}. If $M=m$ holds, we are facing the following problem:
	\begin{equation*}
		F:(0,1)^k\times (0,1)^m\to\mathbb{R}^m, F(x,y)\defgl\begin{pmatrix}F_1(x,y)\\\vdots\\F_m(x,y)\end{pmatrix}
	\end{equation*}
	where $F$ is a continuous and differentiable function.\\
	Let $J$ be the Jacobian matrix defined by
	\begin{equation*}
		J(y)\defgl\frac{\partial F}{\partial y} \defgl \frac{\partial(F_1,\dots, F_m)}{\partial(y_1,\dots, y_m)}\defgl
		\begin{pmatrix}
			\frac{\partial F_1}{\partial y_1} & \cdots & \frac{\partial F_1}{\partial y_m}\\
			\vdots&\ddots&\vdots\\
			\frac{\partial F_m}{\partial y_1} & \cdots & \frac{\partial F_m}{\partial y_m}\\
		\end{pmatrix}
	\end{equation*}
	and define 
	\begin{equation*}
		\begin{gathered}
			x_{\text{hom}}\defgl\left(\frac1N, \dots, \frac1N\right)\text{ with $k$ entries,}\\
			y_{\text{hom}}\defgl\left(\frac1N, \dots, \frac1N\right)\text{ with $m$ entries.}
		\end{gathered}
	\end{equation*}
	If $\mathop{det} J\left(y_{\text{hom}}\right)\neq 0$ holds, we can choose $k$ variables arbitrarily in a small neighborhood $U_1\subset(0,1)^k$ of $x_{\text{hom}}$ and thus the IFS has $k$ free parameters.
\end{prop}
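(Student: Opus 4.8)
The plan is to read the statement as a direct instance of the implicit function theorem, centred at the homogeneous point $(x_{\text{hom}}, y_{\text{hom}})$, with $k$ emerging as the dimension of the local solution set. Before the theorem of \cite{Forster} can be invoked I must secure three points: that $F$ is continuously differentiable on its open domain, that $(x_{\text{hom}}, y_{\text{hom}})$ is genuinely a zero of $F$, and that the resolving map produced by the theorem takes its values inside the open cube $(0,1)^m$. The regularity is immediate: by construction each component $F_i = g_i - h_i$ is a difference of products of the coordinates, possibly with rational exponents stemming from Method \ref{method:rpt}, and on the strictly positive domain $(0,1)^k \times (0,1)^m$ such products are smooth, so $F \in C^1$ as already recorded in the statement.

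The heart of the argument is the verification $F(x_{\text{hom}}, y_{\text{hom}}) = 0$. Every equation collected in \eqref{eq:defi:nullstellenproblem-1} is either one of the equal-length relations $m(v_1)\cdots m(v_n) = m(w_1)\cdots m(w_n)$ furnished by Proposition \ref{prop:simplB2}, or the normalization $\sum_{a=1}^N m(a) = 1$. Substituting the homogeneous weights $m(a) = 1/N$ turns both sides of every product relation into $(1/N)^n$ and the normalization into $N \cdot (1/N) = 1$; hence each $F_i$ vanishes and $(x_{\text{hom}}, y_{\text{hom}})$ solves the whole system. This is the step where the structure of the problem is actually used, and it explains why the homogeneous point is the natural expansion centre: condition (B1) guarantees that the homogeneous case is an admissible representation, so anchoring the implicit function there is legitimate.

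With $\det J(y_{\text{hom}}) \neq 0$ by hypothesis and $M = m$ (so that no free equations remain to constrain the $x$-coordinates among themselves), the implicit function theorem yields an open neighbourhood $U_1 \subset (0,1)^k$ of $x_{\text{hom}}$ together with a unique $C^1$ map $\varphi : U_1 \to \mathbb{R}^m$ satisfying $\varphi(x_{\text{hom}}) = y_{\text{hom}}$ and $F(x, \varphi(x)) = 0$ for all $x \in U_1$. Since $\varphi$ is continuous and $\varphi(x_{\text{hom}}) = y_{\text{hom}} \in (0,1)^m$, shrinking $U_1$ keeps $\varphi(U_1) \subset (0,1)^m$, so for every $x \in U_1$ the pair $(x, \varphi(x))$ is an admissible mass distribution solving all equations. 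Thus $x_1, \dots, x_k$ may be prescribed arbitrarily within $U_1$ while $y$ is forced, which is exactly the claim that the IFS has $k$ free parameters. I do not expect a deep obstacle here; the only genuine care lies in the containment $\varphi(U_1) \subset (0,1)^m$ and in remembering that the conclusion is local, so that ``$k$ free parameters'' names the dimension of the solution set in a neighbourhood of the homogeneous distribution rather than a global count.
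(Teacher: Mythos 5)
Your proposal is correct and follows essentially the same route as the paper: a direct application of the implicit function theorem at the homogeneous point, yielding the resolving map $G$ (your $\varphi$) and hence $k$ locally free coordinates. You additionally verify the hypothesis $F(x_{\text{hom}},y_{\text{hom}})=0$ via the equal-word-length structure of the product relations and the normalization --- a detail the paper's proof tacitly assumes --- but this only makes the same argument more complete, not different.
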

\begin{proof}
	The proof is in fact an application of the implicit function theorem.\\
	Recall that if the Jacobian $J$ is invertible at the point $y_{\text{hom}}$, then there exists an open neighborhood $U_1\subset(0,1)^k$ of $x_{\text{hom}}$, an open neighborhood $U_2\subset(0,1)^m$ of $y_{\text{hom}}$ and a continuous differentiable function $G:U_1\to U_2$ with $G(x_{\text{hom}}) =  y_{\text{hom}}$ such that
	\begin{equation*}
		F(x, G(x)) =0\text{\qquad holds for all }  x\in U_1.
	\end{equation*}
	The exact form of $G$ is for us irrelevant, but we can choose in $U_1$ each coordinate of $x$ independently from each other, thus we can choose $k$ weights and the IFS has $k$ free parameters. 
\end{proof}
\begin{rem}
	We can apply Proposition \ref{prop:applIFT} even if in the case of $m<M$. For this, we have to extend the fixed variables by $m_0 = M-m$ free variables. We have to be careful with the choice, since we could choose variables which lead to a non--invertible Jacobian matrix. Thus we have to consider in this case all possible subsets of free variables with cardinality $m_0$. We set up for each the Jacobian and if $\mathop{det} J(y_{\text{hom}})\neq 0$ we have found a constellation such that the implicit function theorem is applicable. It then follows, that the IFS has $N-M$ free parameters.
\end{rem}
In total we now know how to determine the number of free parameters. One thing still remains, which is the relatively high computing time.\\
We can speed up our algorithm if we split up our problem into multiple parts. To be precise, we split this up in the number of kernels of the computer and each kernel receives a part. Thus we can calculate parallel. A further acceleration would be possible, if we split up our algorithm to multiple computers, but we skip this here since it would make our algorithm harder to understand and the essential part of splitting up is already contained in the parallel computing--part.%
\bigskip 
\\
All together we can set up a pseudo algorithm, which can be found in Listing \ref{lst:PseudoCode}. %
\begin{lstlisting}[style=pseudocode, caption={Pseudo algorithm for calculating the number of free parameters to all IFS generating a specific fractal.}, label={lst:PseudoCode}, columns=fullflexible, breaklines=true]
choose the fractal to consider
all_IFS := list of all possible IFS

all_eq_classes := determine all equivalence classes of all_IFS (compare to listing *(\ref{lst:PseudoCodeEqClasses}*))

for each kernel:
   pick an element from all_eq_classes which was uptil now not considered:
      set up list of equations associated to the IFS
      apply Method *(\ref{method:se}*) and receive free_equations, fix_equations, free_variable, fix_variables (with the notation of Definition *(\ref{defi:nullstellenproblem}*))
      if length(free_equations) = 0:
         set up Jacobian matrix J
         if det(J(y_hom)) *($\not=$*) 0:
            save M as number of free parameters
         else:
            raise Warning
      else:
         for all combinations of subsets of free_variable:
            combine combination and fix_variable
            set up Jacobian matrix J
            if det(J(y_hom)) *($\not=$*) 0:
               save M as number of free parameters
            else:
               raise Warning
merge results of the different kernels
return list with IFS, number of free parameters and length of its equivalence class
\end{lstlisting}%
In line 15 and 23 there is a statement for raising a Warning. In fact, up to now this has not occurred on any considered fractal. If a warning would raise we should extend our Method \ref{method:sv}.\\
This algorithm has some nice properties, but at the same time also some disadvantages. One of this disadvantages is the total time our algorithm needs for his total computation. This comes from the fact that we check all equivalence classes of iterated function systems. For this, recall that the number of all IFS grows very fast by the law
\begin{equation*}
 \# \text{IFS} = |\text{different mappings for a single copy}|^{N}.
\end{equation*}
The number of equivalence classes cannot calculated in an explicit form, but is approximately
\begin{equation*}
 \# \text{equivalence classes} \approx |\text{different mappings for a single copy}|^{N-1}.
\end{equation*}
Our algorithm needs more calculating time if the number of equivalence classes raises. Since the amount grows exponentially, this gets more worse, if either the alphabet gets bigger or if we have a higher number of possible small mappings. The fact, that we use parallel computing can only compensate this exponential growing rate in a minor way.\\
Further it should not be underestimated, that the number of equations raises also the computing time. Thus the time to terminate the algorithm depends in the first place on the number of IFS and in the second place on the numbers of equations/variables (and of course on the used hardware). 
\bigskip
\\
Let us now apply our algorithm to several (common) fractals. Of course, we want to investigate the already introduced fractals Sierpi\'nski gasket and the 3-level Sierpi\'nski gasket. Further we want to consider the Vicsek fractal (see \cite{Vicsek}), the Pentagasket with and without hole (see \cite{ASST2003, Imai2000}) and the Hexagasket (see \cite{Strichartz}). The Hexagasket could also be considered with a filled center, but for now we skip this, since it would exceed the computing time, since there are about 35.8mio different IFS and approximately 3mio equivalence classes.\\
As a small reminder contains Figure \ref{fig:attractors} (resp. the subfigures) the associated attractors of these fractals.\\%
\begin{figure}
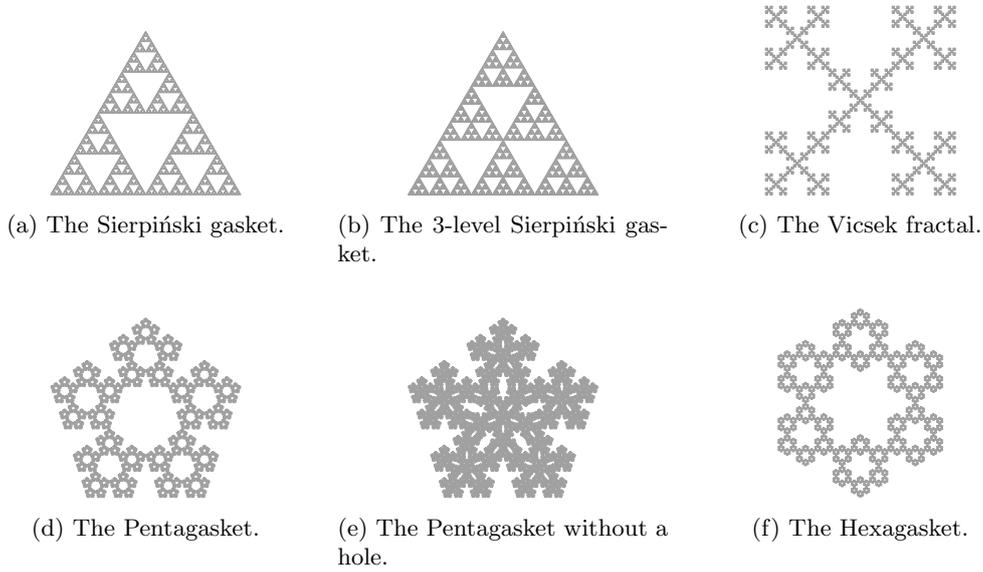
%
	\centering%
	\subcaptionbox{The Sierpi\'nski gasket.\label{fig:attractors-SG}}[0.29\textwidth]{\centering\includegraphics[page=26]{graphics.pdf}}%
	\quad%
	\subcaptionbox{The 3-level Sierpi\'nski gasket.\label{fig:attractors-SG3}}[0.29\textwidth]{\centering\includegraphics[page=27]{graphics.pdf}}%
	\quad%
	\subcaptionbox{The Vicsek fractal.\label{fig:attractors-Vicsek}}[0.29\textwidth]{\centering\includegraphics[page=28]{graphics.pdf}}%
	\\[3ex]%
	\subcaptionbox{The Pentagasket.\label{fig:attractors-Pentagasket}}[0.29\textwidth]{\centering\includegraphics[page=29]{graphics.pdf}}%
	\quad%
	\subcaptionbox{The Pentagasket without a hole.\label{fig:attractors-PentagasketOhneLoch}}[0.29\textwidth]{\centering\includegraphics[page=30]{graphics.pdf}}%
	\quad%
	\subcaptionbox{The Hexagasket.\label{fig:attractors-Hexagasket}}[0.29\textwidth]{\centering\includegraphics[page=31]{graphics.pdf}}%
	\caption{The attractors of the Sierpi\'nski gasket, the 3-level Sierpi\'nski gasket, the Vicsek fractal, the Pentagasket (with and without hole) and the Hexagasket.}%
	\label{fig:attractors}%
\end{figure}%
After we execute our algorithm we receive for every fractal a characterization of the associated space of solutions. In Table \ref{table:results-algo} the results are summarized. \\%
\begin{table}%
	\centering%
 	\begin{tabular}{@{}m{3.8cm}rrrrrR{2.0cm}@{}}
	\toprule& SG2 & SG3 & Vicsek & Pentagasket & \multicolumn1{m{1.99cm}}{Pentagasket without hole} & Hexagasket \\
	\midrule
	$N$& \numprint{3} & \numprint{6} & \numprint{5} & \numprint{5} & \numprint{6} & \numprint{6} \\[1ex]
	number of different mappings& \numprint{6} & \numprint{6} & \numprint{8} & \numprint{10} & \numprint{10} & \numprint{12} \\
	\addlinespace
	number of equations which have to be fulfilled& \numprint{3} & \numprint{9} & \numprint{4} & \numprint{5} & \numprint{25} & \numprint{6} \\
	\addlinespace
	number of equivalence classes& \numprint{44} & \numprint{7860} & \numprint{4360} & \numprint{10104} & \numprint{100220} & \numprint{250010} \\
	\addlinespace
	0 free parameter& \numprint{194} & \numprint{46257} & \numprint{20544} & \numprint{88025} & \numprint{999995} & \numprint{2599398} \\
	1 free parameter& \numprint{21} & \numprint{399} & \numprint{10112} & \numprint{11875} & \numprint{5} & \numprint{361007} \\
	2 free parameters& \numprint{1} & \numprint{0} & \numprint{2048} & \numprint{100} & \numprint{0} & \numprint{24075} \\
	3 free parameters& - & \numprint{0} & \numprint{64} & \numprint{0} & \numprint{0} & \numprint{1452} \\
	4 free parameters& - & \numprint{0} & \numprint{0} & \numprint{0} & \numprint{0} & \numprint{51} \\
	5 free parameters& - & \numprint{0} & - & - & \numprint{0} & \numprint{1} \\
	\addlinespace
	total number of IFS& \numprint{216} & \numprint{46656} & \numprint{32768} & \numprint{100000} & \numprint{1000000} & \numprint{2985984} \\
	\bottomrule
	\end{tabular}
	\caption{Summarized results about the number of free parameters on different fractals.}
	\label{table:results-algo}
\end{table}%
First of all the length of the alphabet $N$ is listed, which should be only a small reminder. Further the table contains the number of different small mappings for each small copy and the number of equations which have to be fulfilled such that \ref{B2} holds. Those equations differ for each IFS but the number stays the same since only the words interchange.\\
The next entry of the table contains the number of equivalence classes which occur. The amount of equivalence classes is slightly more than the number of different mappings to the power of $(N-1)$, since every mapping occurs at most once. But since there are some IFS that are invariant under mapping, there are some more.\\
The next few entries describe the occurrence of free parameters in total numbers. In other words, if we add all up, we receive the total number of IFS, which is as a reminder added at the bottom of the table.

As we can see, there is only one realization of the Sierpi\'nski gasket with two free parameters which is exactly Example \ref{bsp:SG2-2fp}. Further occur only 21 IFS with one free parameter and the majority of cases have zero free parameters.\\
The 3-level Sierpi\'nski gasket is generated by six small copies and thus five free parameters would be the maximum of possible free parameters. Indeed this does not occur. Neither four, three or two parameters do. As we can see, there are only 399 IFS  (which is about 0.9\%) with one free parameters and thus this is very uncommon. The remaining IFS have all zero free parameters which implies, that those IFS only fulfill \ref{B2} in the homogeneous case. One of the reasons for this fact is that there are in total nine equations which have to be fulfilled, making it harder to find any solution besides the homogeneous case.\\
On the Vicsek fractal the number of free parameters are wider distributed, for example there are 64 IFS with three free parameters and over \numprint{12160} IFS with one or two free parameters. Nevertheless an IFS where the full potential of four free parameters can be chosen is missing.\\
The Pentagasket behaves in a total other way than the Vicsek fractal, even if it has also an alphabet of length $N=5$. Most IFS have zero free parameters, about 11.8$\%$ have one free parameter and only \numprint{100} have two free parameters. Those \numprint{100} IFS are contained in \numprint{14} different equivalence classes.\\
If we add further a small copy in the middle, we receive the Pentagasket without hole. It is obvious, that this restricts the number of free parameters. Indeed this restricts the choice in a massive way. There is only one (!) equivalence class containing five IFS, where we can choose one free parameter. On all other IFS we are not able to choose any free parameter.\\
Last but not least we want to take a look at the Hexagasket. The distribution of free parameters is again relatively broad and we are indeed able to choose an IFS with the maximum of five free parameters. On the other hand we have again on most IFS choose only zero free parameters. This schema occurs on all fractals and we can discover another schema: if there are more equations which have to be fulfilled it is harder to choose many free parameters.\\
\\
The calculation time on the computer differs from fractal to fractal in a great manner. The following computing times refer all to the same computer with 60 kernels. The Sierpi\'nski gasket took about only a few seconds to compute all 44 equivalence classes. The 3-level Sierpi\'nski gasket SG$_3$ took about \numprint{1700} seconds, whereas the Vicsek only took \numprint{146} seconds and the Pentagasket took \numprint{563} seconds. Compared to the number of equivalence classes performs the SG$_3$ thus relatively slow.\\
The computing time of the Pentagasket without hole took extremely long and our computer needed about \numprint{191150} seconds (or approx. 2d 5h). The Hexagasket was again calculated relatively fast and took only \numprint{26400} seconds respectively 7h 20min.

In future we could use this algorithm to investigate also other fractals. For this we only need to code the needed equations and the definition of the small copies, which can be done relatively quickly. Also the determination of the equivalence classes can be done quickly. The most restricting part is indeed the calculation of the free parameters. The Pentagasket without hole indicates that the number of equations is the main factor for a long calculating time, which will be the limiting factor of further considerations.
%
%
%
%
\bibliography{article}
\bibliographystyle{alpha}
\end{document}